\newcommand{\Q}{{\mathbb Q}}
\newcommand{\Z}{{\mathbb Z}}
\newcommand{\F}{{\mathbb F}}
\newcommand{\N}{{\mathbb N}}
\newcommand{\OO}{{\mathcal O}}
\newcommand{\p}{{\mathfrak p}}
\newcommand{\<}{\langle}
\renewcommand{\>}{\rangle}
\newcommand{\rank}{\mathrm{rank}}
\newcommand{\Gal}{\mathrm{Gal}}
\renewcommand{\ker}{\mathrm{ker}}
\newcommand{\coker}{\mathrm{coker}}
\newcommand{\Hom}{\mathrm{Hom}}
\DeclareMathOperator{\ord}{ord}
\title[{Generalizations of Iwasawa's `Riemann-Hurwitz' Formula}]{GENERALIZATIONS OF IWASAWA'S `RIEMANN-HURWITZ' FORMULA FOR CYCLIC $p$-EXTENSIONS OF NUMBER FIELDS}
\author{JORDAN SCHETTLER}
\begin{document}
\newtheorem{thm}{Theorem}
\newtheorem{conj}[thm]{Conjecture}
\newtheorem{prop}[thm]{Proposition}
\newtheorem{lemma}[thm]{Lemma}
\newtheorem{corollary}[thm]{Corollary}
\theoremstyle{remark}
\newtheorem{rem}[thm]{Remark}
\theoremstyle{definition}
\newtheorem{defn}[thm]{Definition}
\newtheorem{exam}[thm]{Example}
\numberwithin{equation}{section}
\numberwithin{equation}{thm}

\begin{abstract}
We produce generalizations of Iwasawa's `Riemann-Hurwitz' formula for number fields. These generalizations apply to cyclic extensions of number fields of degree $p^n$ for any positive integer $n$. We first deduce some congruences and inequalities and then use these formulas to establish a vanishing criterion for Iwasawa $\lambda$-invariants which generalizes a result of Takashi Fukuda et. al. for totally real number fields.
\end{abstract}

\maketitle

\keywords{Iwasawa; Riemann-Hurwitz; lambda invariant; vanishing criterion; Greenberg's conjecture.}


\section{Introduction}

Fix a rational prime $p$ and an algebraic closure $\overline{\Q}$ of $\Q$. Let $\Q_{\infty} \subseteq \overline{\Q}$ denote the unique $\Z_p$-extension of $\Q$.
In particular, we have
\begin{align*}
\Q_{\infty} \subseteq \bigcup_{n \geq 1} \Q(\zeta_{p^n})
\end{align*}
where $\zeta_{p^n}$ denotes a primitive $p^n$th root of unity. Following Iwasawa in \cite{Iwas}, we define a $\Z_p$-field to be a finite extension of $\Q_{\infty}$. Equivalently, $L$ is a $\Z_p$-field when $L = {l}\Q_{\infty}$ for some number field ${l}$, so here $L$ is the cyclotomic $\Z_p$-extension of ${l}$. We define the ideal class group of a $\Z_p$-field $L$ to be the quotient $C_L \mathrel{\mathop:}=  I_L/P_L$ where $I_L$ is the group of invertible fractional ideals of the ring of integers $\OO_L$ in $L$ and where $P_L \leq I_L$ is the subgroup of principal fractional ideals.
\begin{thm}[Iwasawa]
Let $L$ be a $\Z_p$-field and let $A_L$ denote the $p$-primary part of the class group of $L$. Then there is an isomorphism of $\Z_p$-modules
\begin{align*}
A_L \cong (\Q_p/\Z_p)^{\lambda_L} \oplus M
\end{align*}
where $M$ has bounded exponent, i.e., $p^nM = 0$ for some $n$. In fact, if we write $L = {l}\Q_{\infty}$ for some number field ${l}$, then the Iwasawa invariants $\lambda(L/{l})$, $\mu(L/{l})$ for the $\Z_p$-extension $L/{l}$ satisfy
\begin{enumerate}
\item $\lambda(L/{l}) =\lambda_L $
\item $\mu(L/{l}) = 0 \Leftrightarrow M = 0$.
\end{enumerate}
In particular, this means that the vanishing of $\mu(L/{l})$, as conjectured by Iwasawa, only depends on $L$, so we may write $\mu_L=0$ to denote this.
\end{thm}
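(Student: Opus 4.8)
The plan is to realize $A_L$ as a direct limit of the finite groups $A_n := A_{l_n}$ and to read off its structure from the Iwasawa module built as the corresponding inverse limit. Writing $L = l\Q_\infty = \bigcup_{n\ge 0} l_n$ with $[l_n:l]=p^n$, the ring $\OO_L = \varinjlim \OO_{l_n}$ is a filtered union of Dedekind domains, so (the Picard functor commuting with filtered colimits) its class group is $\varinjlim C_{l_n}$ under extension of ideals; taking $p$-primary parts gives $A_L \cong \varinjlim_n A_n$, with transition maps $\iota_n\colon A_n\to A_{n+1}$ the extension-of-ideal-classes maps. On the other hand, Iwasawa's theory supplies the module $X := \varprojlim_n A_n$ taken along the norm maps $N_n\colon A_{n+1}\to A_n$; this $X$ is a finitely generated torsion module over $\Lambda := \Z_p[[T]]$ whose invariants are $\lambda = \lambda(L/l)$ and $\mu = \mu(L/l)$, and one has $|A_n| = p^{\lambda n + \mu p^n + \nu}$ for $n\gg 0$. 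Since every $p$-primary abelian group splits as (maximal divisible) $\oplus$ (reduced), with the divisible part a power of $\Q_p/\Z_p$, the theorem reduces to two assertions: that the corank of $A_L$ equals $\lambda$, and that the reduced part $M$ has bounded exponent with $M=0$ precisely when $\mu=0$.

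For the main computation I would first make the harmless reduction of replacing $l$ by a finite layer $l_{n_0}$, which alters neither $A_L$ nor $\lambda,\mu$, so that all ramified primes of $L/l$ are totally ramified and, for $n$ large, the Iwasawa isomorphisms take the form $A_n \cong X/\omega_n X$ with $\omega_n = (1+T)^{p^n}-1$; under these identifications $\iota_n$ becomes multiplication by $\nu_{n+1,n} := \omega_{n+1}/\omega_n$ (note $\nu_{n+1,n}\equiv p \pmod{\omega_n}$, matching $N_n\circ\iota_n = p$), so $A_L \cong \varinjlim\bigl(X/\omega_n X,\ \times\,\nu_{n+1,n}\bigr)$. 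Now decompose $X$ by the structure theorem: up to a pseudo-isomorphism with finite kernel and cokernel, $X \sim \bigl(\bigoplus_i \Lambda/(p^{m_i})\bigr) \oplus \bigl(\bigoplus_j \Lambda/(f_j)\bigr)$ with the $f_j$ distinguished, $\mu=\sum_i m_i$ and $\lambda = \sum_j \deg f_j$. Because filtered colimits are exact and a $\Lambda$-homomorphism commutes with the $\omega_n$ and $\nu_{n+1,n}$, I would compute the colimit summand by summand: each $\Lambda/(f_j)$ is free of rank $\deg f_j$ over $\Z_p$ and contributes a divisible factor $(\Q_p/\Z_p)^{\deg f_j}$, while each $\Lambda/(p^{m_i})$, having every layer of exponent dividing $p^{m_i}$, contributes a bounded-exponent factor; the finite error from the pseudo-isomorphism is absorbed into the reduced part. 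Summing yields $A_L \cong (\Q_p/\Z_p)^{\lambda}\oplus M$ with $M$ of bounded exponent and $M\ne 0$ exactly when some $p$-power summand occurs, i.e.\ exactly when $\mu>0$.

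The step I expect to be the main obstacle is precisely this bridge between the \emph{direct} limit along extension maps and the \emph{inverse} limit along norm maps: one must verify that passing through the $X/\omega_n X$ picture, together with the finite distortions coming from the pseudo-isomorphism and from the ramification corrections in $A_n\cong X/\omega_n X$, changes neither the computed corank nor the exponent of $M$. The cleanest way to secure this is by Pontryagin duality: $A_L^\vee = \varprojlim_n A_n^\vee$ is a finitely generated $\Lambda$-module, and using the adjunction between extension and norm encoded in $N_n\circ\iota_n = p$ and $\iota_n\circ N_n = \sum_{\sigma}\sigma$, one identifies $A_L^\vee$ up to pseudo-isomorphism with Iwasawa's adjoint $\alpha(X)$, which shares the characteristic ideal of $X$ and hence its $\lambda$ and $\mu$. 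It then follows that $\mathrm{corank}\, A_L = \rank_{\Z_p} A_L^\vee = \lambda$, and that $M$ is dual to the $\Z_p$-torsion of $A_L^\vee$, which is bounded and nonzero iff $\mu>0$. Finally, since (1) and (2) characterize $\lambda_L$ and the vanishing of $M$ purely in terms of the intrinsic group $A_L$, they are independent of the auxiliary field $l$; in particular $\mu(L/l)=0$ depends only on $L$, justifying the notation $\mu_L=0$.
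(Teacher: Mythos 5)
The paper offers no proof of its own here---it simply cites Iwasawa's papers \cite{Iwas5} and \cite{Iwas4}---and your outline (realizing $A_L$ as $\varinjlim_n A_n$, applying the structure theory of finitely generated torsion $\Lambda$-modules to $X=\varprojlim_n A_n$, and bridging direct and inverse limits via Pontryagin duality and Iwasawa's adjoint $\alpha(X)$) is precisely the classical argument found in those references. Your proposal is therefore correct in substance and takes essentially the same route as the proof the paper defers to; the imprecision in the intermediate claim $A_n\cong X/\omega_n X$ (which in general requires the correction submodules $\nu_{n,e}Y_e$ when several primes ramify, even after enlarging the base field) is harmless, since the duality/adjoint step you designate as the actual load-bearing argument does not depend on it.
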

\begin{proof}
See \cite{Iwas5} and \cite{Iwas4}.
\end{proof}
In \cite{Iwas}, Iwasawa used the above structure thm and Galois actions on class groups to prove the following.
\begin{thm}[Iwasawa's `Riemann-Hurwitz' Formula]\label{tm}
Suppose $L/K$ is a cyclic extension of $\Z_p$-fields of degree $[L:K] = p$. If $L/K$ is unramified at the infinite places and $\mu_K=0$, then $\mu_L=0$ and
\begin{align}\label{Iwasform}
\lambda_L = [L:K]\lambda_K + (p-1)(h_2-h_1) + \sum_{w \nmid p} (e(w)-1)
\end{align}
where $e(w)$ denotes the ramification index in $L/K$ of a place $w$ of $L$ not lying above $p$ and for $i=1,2$ we write $h_i$ for the $\F_p$-dimension of the cohomology group $H^i(\Gal(L/K), \OO_L^{\times})$.
\end{thm}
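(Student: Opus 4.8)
The plan is to realize $\lambda_L$ and $\lambda_K$ as $\Z_p$-ranks of Iwasawa modules and to study their difference representation-theoretically. Writing $L = l\Q_\infty$ with finite layers $L_n$, set $X_L = \varprojlim_n A_{L_n}$, the inverse limit of $p$-class groups under the norm maps; once $\mu_L = 0$ is known, the structure theorem gives that $X_L$ is a finitely generated $\Z_p$-module with $\rank_{\Z_p} X_L = \lambda_L$, and similarly for $K$. Put $G = \Gal(L/K) = \langle \sigma\rangle$, cyclic of order $p$, acting $\Z_p$-linearly on $X_L$. Over $\Q_p$ the group $G$ has exactly two irreducible representations, the trivial one $\mathbf 1$ and a faithful one $\psi$ of dimension $p-1$ (afforded by $\Q_p(\zeta_p)$), so that $V_L := X_L \otimes_{\Z_p}\Q_p \cong \mathbf 1^{\oplus a}\oplus \psi^{\oplus b}$ and $\lambda_L = a + (p-1)b$. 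The entire formula thus reduces to computing the two multiplicities $a$ and $b$.

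First I would establish $\mu_L = 0$ from $\mu_K = 0$. Since $G$ is a $p$-group, a Nakayama-type argument shows $X_L$ is finitely generated over $\Z_p$ as soon as its $G$-coinvariants are, and the control/genus comparison between $(X_L)_G$ and $X_K$ lets one deduce finite generation of $X_L$ over $\Z_p$ from that of $X_K$; by the structure theorem this is precisely $\mu_L = 0$. With $\mu_L = 0$ in hand, the invariant multiplicity satisfies $a = \dim_{\Q_p} V_L^G = \dim_{\Q_p}(V_L)_G$ (invariants and coinvariants of a finitely generated $\Z_p[G]$-module have equal $\Z_p$-rank, differing only by finite Tate-cohomology groups), and the comparison map $(X_L)_G \to X_K$ has finite kernel and cokernel: the genus discrepancy from the finitely many ramified primes is bounded and hence invisible to ranks. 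This yields $a = \lambda_K$.

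The heart of the argument is the computation of $b$, which I would extract from the Herbrand quotient $q(X_L) = |\hat H^0(G,X_L)|/|\hat H^1(G,X_L)|$. For the $\Z_p[G]$-lattice $X_L$ this equals $p^{a-b}$, since a trivial $\Z_p$-summand contributes a factor $p$, a faithful ($\psi$-type) summand a factor $p^{-1}$, and induced summands are cohomologically trivial; thus $b = \lambda_K - \log_p q(X_L)$. It remains to evaluate $q(X_L)$ arithmetically. Here I would pass through the fundamental exact sequences of class field theory for the layers $L_n$, namely $1 \to \OO_{L_n}^\times \to L_n^\times \to P_{L_n}\to 1$ and $1 \to P_{L_n}\to I_{L_n}\to C_{L_n}\to 1$, together with the decomposition $I_{L_n} \cong \bigoplus_v \mathrm{Ind}_{G_w}^G \Z$ over places $v$. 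By Shapiro's lemma each ramified place $w \nmid p$ (necessarily totally ramified, with $|G_w| = e(w) = p$) contributes $\hat H^*(G_w,\Z)$ and hence accounts for $e(w)-1$, while the global units contribute $q(\OO_L^\times) = p^{h_2 - h_1}$, using $H^2(G,\OO_L^\times)\cong\hat H^0$ and $H^1(G,\OO_L^\times) = \hat H^1$ for cyclic $G$. Assembling the multiplicative Herbrand relation and taking the limit in $n$ gives $\log_p q(X_L) = -(h_2 - h_1) - \tfrac{1}{p-1}\sum_{w\nmid p}(e(w)-1)$, whence $b = \lambda_K + (h_2-h_1) + \tfrac{1}{p-1}\sum_{w\nmid p}(e(w)-1)$, and $\lambda_L = a + (p-1)b$ is exactly \eqref{Iwasform}.

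I expect the main obstacle to be the passage from the finite layers to the limit module. The Herbrand quotient of any finite module is $1$, so the finite-level sequences by themselves see nothing of $\lambda$; the arithmetic content must be recovered on $X_L$, which requires controlling how the cohomology of $\OO_{L_n}^\times$ and of the local factors stabilizes in the tower and verifying that the wildly ramified primes above $p$ contribute only bounded terms and so drop out of the rank computation — this is precisely why the ramification sum in \eqref{Iwasform} is restricted to $w \nmid p$. A secondary delicate point is making the genus comparison between $(X_L)_G$ and $X_K$ sharp enough to pin down $a = \lambda_K$ on the nose rather than merely up to the faithful part.
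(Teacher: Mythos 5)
Your outline has the right skeleton, and in fact it is essentially Iwasawa's own argument in \cite{Iwas} (which is what this paper cites rather than reproves; the paper's Section \ref{facts} records the needed ingredients as Lemma \ref{chi}): decompose the rational representation of $G$ on a rank-$\lambda_L$ lattice, pin the trivial multiplicity $a$ by genus theory, and extract the faithful multiplicity $b$ from a Herbrand quotient computed by class field theory; your final bookkeeping does reproduce Eq.\ (\ref{Iwasform}). The genuine gap is that the step carrying all the content --- the evaluation of $q(X_L)$ --- is deferred, and the mechanism you propose for it would fail. At a finite layer, $I_{L_n}\cong\bigoplus_v \mathrm{Ind}_{G_w}^{G}\Z$ where $G_w$ is the \emph{decomposition} group, so Shapiro's lemma yields contributions measured by $|G_w|=e(w)f(w)$, not $e(w)$: inert places prime to $p$ and the places over $p$ contribute nontrivially, and the relation you want is false level by level. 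It becomes true only over the $\Z_p$-fields themselves, by two special facts: residue fields of $\Z_p$-fields at places prime to $p$ admit no $p$-extensions, so $f(w/u)=1$ (every such place is split or totally ramified), and the ideal groups at places over $p$ become uniquely $p$-divisible in the limit, so their Tate cohomology dies. Moreover, your limit is along norm maps (to produce $X_L=\varprojlim_n A_{L_n}$), but the inverse limit of the discrete, non-compact modules $L_n^{\times}$, $P_{L_n}$, $I_{L_n}$ along norms does not preserve exactness of the two fundamental sequences, so there is no inverse-limit Herbrand relation to ``assemble.'' The correct route (Iwasawa's, encapsulated in Lemma \ref{chi}) works with the \emph{direct} limits over the $\Z_p$-field, computes $\chi(G,A_L)$ from the exact sequences of $G$-modules $\OO_L^{\times}\rightarrow L^{\times}\rightarrow P_L$ and $P_L\rightarrow I_L\rightarrow C_L$, and transfers to a lattice via $\chi(G,A_L)=-\chi(G,A_L^{\ast})$, where $A_L^{\ast}$ is the Pontryagin dual, a $\Z_p$-lattice of rank $\lambda_L$ once $\mu_L=0$; if you insist on $X_L$ instead of $A_L^{\ast}$, you owe a further argument that the two have the same rational $G$-representation.

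A telling symptom of the same gap: you never use the hypothesis that $L/K$ is unramified at the infinite places, and for $p=2$ the formula can fail without it. That hypothesis enters exactly where your assembly silently assumes $L^{\times}$ contributes nothing: Hilbert 90 kills $H^1(G,L^{\times})$, but $H^2(G,L^{\times})$ is detected by local Brauer groups, whose $p$-parts vanish at all finite places of a $\Z_p$-field, while each real place of $K$ that becomes complex in $L$ contributes $\Z/2$. So the identity $q(G,L^{\times})=1$ is precisely where both the class field theory of $\Z_p$-fields and the archimedean hypothesis are consumed. Finally, the genus-theoretic claim that $(X_L)_G\rightarrow X_K$ has finite kernel (needed for $a=\lambda_K$, and flagged by you as delicate) does require an argument: the kernel is generated by the finitely many inertia classes, each killed by the norm, hence it lies in $\hat{H}^{-1}(G,X_L)$, which is finite once $\mu_L=0$ is in hand; alternatively, work on the direct-limit side, where $A_L$ is torsion and finitely generated subgroups are automatically finite. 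All of this is fixable, but as written the proposal verifies the easy representation-theoretic identities and postpones exactly the parts of the theorem that are hard.
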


\section{The Euler Characteristic}\label{facts}

We wish now to restate Iwasawa's formula (\ref{Iwasform}) in a way which will lend itself more conveniently to generalization.
\begin{defn}
Let $G$ be a cyclic group of prime power order $p^n$. Suppose $M$ is a $G$-module. We define the Euler characteristic $\chi(G,M) \in \Z$ to be the exponent of $p$ in the Herbrand quotient
\begin{align*}
p^{\chi(G, M)} = \frac{|H^2(G, M)|}{|H^1(G,M)|}
\end{align*}
when these quantities are finite.
\end{defn}
Note that $\chi$ inherits the following properties (see \cite{Arti}) directly from the Herbrand quotient:
\begin{enumerate}
\item $\chi$ is additive on short exact sequences of $G$-modules
\item $\chi(G,M) = 0$ when $M$ is a finite $G$-module
\item $\chi(G, M^{\ast}) = -\chi(G,M)$ when $M^{\ast} = \Hom_{\Z_p}(M, \Q_p/\Z_p)$ is the $p$-Pontryagin dual of a $\Z_pG$-module $M$.
\end{enumerate}
These properties and the techniques of \cite{Iwas} can be used to derive the following computations.
\begin{lemma}\label{chi}
Suppose $L/K$ is a cyclic $p$-extension of $\Z_p$-fields with $G = \Gal(L/K)$. Then
\begin{align*}\chi(G,A_L) = -\chi(G, P_L)  + \chi(G,I_L)\end{align*}
and
\begin{align*}\chi(G,I_L) = \ord_p|I_L^G/I_K| = \sum_{u \nmid p} \ord_p(e(w/u))\end{align*}
where $\ord_p(e(w/u))$ is the $p$-adic order of the ramification index in $L/K$ for a finite place $w$ of $L$ lying over a place $u$ of $K$ which does not lie over $p$. If, in addition, $L/K$ is unramified at every infinite place, then
\begin{align*} -\chi(G,P_L) = \chi(G, \OO_L^{\times}).\end{align*}
\end{lemma}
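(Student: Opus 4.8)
The plan is to evaluate each Euler characteristic by resolving $A_L$, $P_L$, $I_L$ through short exact sequences and reducing to explicit Tate cohomology, using the additivity, vanishing-on-finite-modules, and duality properties recorded above. For a cyclic $p$-group $G$ I will freely use the periodicity $H^2(G,M)\cong \widehat H^0(G,M)=M^G/N_G M$ and $H^1(G,M)\cong \widehat H^{-1}(G,M)=\ker(N_G)/I_G M$, where $N_G=\sum_{\sigma\in G}\sigma$ and $I_G$ is the augmentation ideal. Throughout, finiteness of the relevant Herbrand quotients is not automatic for these infinitely generated $\Z_p$-modules; I would secure it by working at the finite layers $l_n/k_n$ (writing $L=l\Q_\infty$, $K=k\Q_\infty$) and passing to the limit, which is the role played by ``the techniques of \cite{Iwas}.''

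For the first identity, I would apply additivity to the tautological sequence $0\to P_L\to I_L\to C_L\to 0$, giving $\chi(G,C_L)=\chi(G,I_L)-\chi(G,P_L)$, and then replace $C_L$ by its $p$-primary part $A_L$. Writing $C_L=A_L\oplus B_L$ with $B_L$ the prime-to-$p$ part, every element of $B_L$ has order prime to $p$, so multiplication by $p$ is an automorphism of $B_L$; it therefore acts invertibly on each $\widehat H^i(G,B_L)$, which is also annihilated by $|G|$, forcing $\widehat H^i(G,B_L)=0$ and hence $\chi(G,B_L)=0$. Additivity then yields $\chi(G,C_L)=\chi(G,A_L)$, and combining gives $\chi(G,A_L)=-\chi(G,P_L)+\chi(G,I_L)$.

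For $\chi(G,I_L)$ the key point is that $I_L$ is a permutation module: grouping the prime ideals of $L$ into $G$-orbits indexed by the finite places $u$ of $K$, I would write $I_L=\bigoplus_u \Z[G/G_w]$, where $G_w$ is the decomposition group of a place $w\mid u$. Shapiro's lemma reduces each summand to $\widehat H^i(G_w,\Z)$; here $H^1(G_w,\Z)=\Hom(G_w,\Z)=0$ and $\widehat H^0(G_w,\Z)=\Z/|G_w|\Z$, so $H^1(G,I_L)=0$ and $\chi(G,I_L)=\ord_p|I_L^G/N_G I_L|=\sum_u \ord_p|G_w|$. I would then write $|G_w|=e(w/u)f(w/u)$ and show the residue degree $f(w/u)$ is prime to $p$ whenever $u\nmid p$: the residue field of $\Q_\infty$ at a rational prime $\ell\neq p$ is the $\Z_p$-extension $\bigcup_n\F_{\ell^{p^n}}$ of $\F_\ell$ (no rational prime splits completely in $\Q_\infty$, so the decomposition group at $\ell$ is a nonzero closed, hence $\cong\Z_p$, subgroup), whose absolute Galois group is pro-(prime-to-$p$); thus the residue field of $K$ at any $u\nmid p$ admits no nontrivial $p$-extension and $f(w/u)=1$, so each such place contributes exactly $\ord_p e(w/u)$. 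The matching with $\ord_p|I_L^G/I_K|$ then comes from $I_L^G=\bigoplus_u\Z\cdot(\sum_{w\mid u}w)$ together with $u\OO_L=e(w/u)\sum_{w\mid u}w$, giving $I_L^G/I_K\cong\bigoplus_u\Z/e(w/u)\Z$. The delicate point, which I expect to be the main obstacle, is the behavior at the places $u\mid p$: there the residue field is finite and the local extensions $L_w/K_u$ lie above the deeply ramified field $\Q_{p,\infty}$, so showing that these terms do not disturb the equality requires the local analysis of \cite{Iwas} rather than the clean residue-field argument available away from $p$.

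For the last identity I would use the sequence $1\to\OO_L^\times\to L^\times\to P_L\to 1$ defining the principal ideals, whose additivity gives $\chi(G,P_L)=\chi(G,L^\times)-\chi(G,\OO_L^\times)$; thus $-\chi(G,P_L)=\chi(G,\OO_L^\times)$ is equivalent to $\chi(G,L^\times)=0$. Hilbert's Theorem~90 gives $H^1(G,L^\times)=0$, so $\chi(G,L^\times)=\ord_p|K^\times/N_{L/K}L^\times|$, and I would evaluate this norm index through the local--global description of class field theory. The archimedean places are exactly where a factor of $p$ could enter (only for $p=2$, via a real place of $K$ becoming complex in $L$); the hypothesis that $L/K$ is unramified at the infinite places removes precisely these contributions, leaving $\chi(G,L^\times)=0$. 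As in the previous part, converting this finite-level norm-index computation into a statement about the infinite $\Z_p$-fields is where Iwasawa's limiting techniques are again indispensable.
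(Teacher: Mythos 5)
The paper gives no proof of this lemma at all---it simply appeals to the listed properties of $\chi$ and to ``the techniques of \cite{Iwas}''---so your proposal has to stand on its own merits. Its skeleton is the right one: the exact sequences $0\to P_L\to I_L\to C_L\to 0$ and $0\to\OO_L^{\times}\to L^{\times}\to P_L\to 0$, the vanishing of $\chi$ on the prime-to-$p$ part of $C_L$, Shapiro's lemma for the permutation structure of $I_L$, the observation that residue fields of $\Z_p$-fields away from $p$ have pro-prime-to-$p$ absolute Galois groups (so $f(w/u)=1$), and Hilbert 90. Your first identity is proved completely and correctly.

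The genuine gap is at the places above $p$, and it cannot be outsourced to ``local analysis,'' because it is precisely the reason the formula reads $\sum_{u\nmid p}$. Your structural claim $I_L=\bigoplus_u\Z[G/G_w]$ is false for $u\mid p$: since the cyclotomic $\Z_p$-extension is eventually totally ramified at $p$, every prime above $p$ at a finite layer becomes a $p$-th power at higher layers, so in the direct limit $I_L=\varinjlim_n I_{l_n}$ the subgroup supported above $p$ is a direct sum of copies of $\Z[1/p]$ permuted by $G$, not of $\Z$. That subgroup is uniquely $p$-divisible, hence cohomologically trivial for the $p$-group $G$, and its $G$-invariants coincide with the image of $I_K$; this is exactly what makes the places above $p$ drop out of both $\chi(G,I_L)$ and $I_L^G/I_K$. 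With your claimed structure, a place $u\mid p$ with nontrivial decomposition group would contribute a spurious term $\ord_p|G_w|>0$, so taken literally your computation yields a formula that contradicts the one to be proved; you flag this as ``the main obstacle'' but leave it unresolved. The same criticism applies to the last identity: the assertion that only archimedean places can put a factor of $p$ into $K^{\times}/N_{L/K}L^{\times}$ is false at every finite layer, where $H^2(\Gal(l_n/k_n),l_n^{\times})$ is a relative Brauer group with infinite $p$-torsion (every finite place whose local degree is divisible by $p$ contributes). What saves the statement is again a limit phenomenon: in $H^2(G,L^{\times})\cong\varinjlim_n H^2(\Gal(l_n/k_n),l_n^{\times})$ the transition maps multiply local invariants by local degrees, which grow without bound at every finite place of the cyclotomic tower, killing all $p$-primary local contributions except the archimedean ones, and those vanish by hypothesis. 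Your proposal gestures at this (``Iwasawa's limiting techniques'') but does not carry it out; without these two points the argument does not establish the lemma.
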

\begin{corollary}
We can restate Iwasawa's formula (\ref{Iwasform}) as
\begin{align}\label{above1}
\lambda_L = p\lambda_K + (p-1)\chi(G, A_L)
\end{align}
In fact, we need not assume that $L/K$ is unramified at the infinite places for Eq. (\ref{above1}) above to hold.
\end{corollary}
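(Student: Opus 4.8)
The plan is to verify that (\ref{Iwasform}) and (\ref{above1}) are term-for-term the same identity once Lemma \ref{chi} is invoked, and then to reprove (\ref{above1}) by a module-theoretic computation that never mentions the infinite places. Since $[L:K]=p$ the leading summands $[L:K]\lambda_K$ and $p\lambda_K$ already coincide, so it suffices to match the two correction terms, i.e. to prove
\begin{align*}
(p-1)(h_2-h_1)+\sum_{w\nmid p}(e(w)-1)=(p-1)\chi(G,A_L).
\end{align*}
For the ramification sum, note that $G$ is cyclic of prime order $p$, so each finite ramification index $e(w/u)$ is either $1$ or $p$; a ramifying place $u\nmid p$ thus carries a single $w$ with $e(w)=p$ and $\ord_p(e(w/u))=1$, while unramified $u$ contribute nothing. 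Comparing with the second formula of Lemma \ref{chi} gives $\sum_{w\nmid p}(e(w)-1)=(p-1)\chi(G,I_L)$. For the cohomological term, $H^1(G,\OO_L^\times)$ and $H^2(G,\OO_L^\times)$ are $\F_p$-spaces of dimensions $h_1$ and $h_2$, so the defining equation $p^{\chi(G,\OO_L^\times)}=|H^2|/|H^1|$ reads $\chi(G,\OO_L^\times)=h_2-h_1$; under the assumption that $L/K$ is unramified at the infinite places the last clause of Lemma \ref{chi} rewrites this as $h_2-h_1=-\chi(G,P_L)$. Substituting both into $\chi(G,A_L)=-\chi(G,P_L)+\chi(G,I_L)$ yields the displayed identity, hence (\ref{above1}).

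To obtain (\ref{above1}) without the hypothesis on the infinite places I would avoid $P_L$ and $\OO_L^\times$ altogether---the relation $-\chi(G,P_L)=\chi(G,\OO_L^\times)$ is exactly what may break when an archimedean place ramifies, which can occur only for $p=2$---and instead compute $\chi(G,A_L)$ from its $\Z_p G$-module structure. With $\mu_L=0$, the dual $A_L^\ast=\Hom_{\Z_p}(A_L,\Q_p/\Z_p)$ is a finitely generated $\Z_p G$-module of $\Z_p$-rank $\lambda_L$, and modulo its finite torsion it is a $\Z_p G$-lattice whose rationalization $A_L^\ast\otimes_{\Z_p}\Q_p$ decomposes as $\Q_p^a\oplus\Q_p(\zeta_p)^b$, so $\lambda_L=a+(p-1)b$. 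Since the Herbrand quotient of a $G$-lattice is unchanged on passing to a finite-index sublattice, $\chi$ on lattices depends only on this rational representation; additivity together with the base values $\chi(G,\Z_p)=1$ and $\chi(G,\Z_p G)=0$ then forces the value $-1$ on the nontrivial $(p-1)$-dimensional constituent $\Q_p(\zeta_p)$. Hence $\chi(G,A_L^\ast)=a-b$, and property (3) gives $\chi(G,A_L)=b-a$. Granting the corank identity $a=\lambda_K$, we get $\lambda_L-p\lambda_K=(a+(p-1)b)-pa=(p-1)(b-a)=(p-1)\chi(G,A_L)$, which is (\ref{above1}) with no mention of the infinite places.

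The step I expect to be the main obstacle is this last corank identity $a=\lambda_K$, equivalently the claim that the natural map $A_K\to A_L^G$ has finite kernel and cokernel so that taking $G$-invariants (or, dually, $G$-coinvariants) preserves the $\Z_p$-corank. Everything else is bookkeeping: the term-matching of the first paragraph is immediate from Lemma \ref{chi}, and the lattice computation of $\chi$ is formal from the additivity and vanishing properties recorded after the definition of $\chi$. By contrast, $a=\lambda_K$ is a genuine Iwasawa-theoretic input, and I would derive it from the standard comparison of $\lambda$-invariants across a degree-$p$ layer of $\Z_p$-fields under $\mu=0$, checking that the ambiguous-class corrections attached to ramified places---archimedean ones included---alter only the finite parts of $A_K$ and $A_L^G$ and therefore leave the coranks, and with them the identity $a=\lambda_K$, intact.
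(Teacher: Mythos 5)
Your proposal is correct and is essentially the paper's own route: the term-matching in your first paragraph is exactly the content of Lemma \ref{chi}, while your rational decomposition $A_L^\ast\otimes_{\Z_p}\Q_p\cong\Q_p^a\oplus\Q_p(\zeta_p)^b$, the Herbrand values $\chi(G,\Z_p)=1$ and $\chi(G,\Z_p[\zeta_p])=-1$, and the corank identity $a=\lambda_K$ obtained from the finite kernel and cokernel of $A_K\to A_L^G$ are precisely the $n=1$ case of the paper's Lemmas \ref{Rei} and \ref{work} and Theorem \ref{above}, where the same snake-lemma step appears. The one point you leave implicit---that $\mu_L=0$ still follows from $\mu_K=0$ when infinite places are allowed to ramify---is likewise taken as known in the paper (Iwasawa's theorem that $\mu=0$ propagates through $p$-extensions), so your argument matches the paper's in both substance and level of detail.
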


\section{General Formulas}

We will derive several generalizations of Iwsawa's formula, but first we need a couple of lemmas. The first lemma is inspired by the ideas of Heller and Reiner in \cite{Hell}.
\begin{lemma}\label{Rei}
Let $G = \langle g \rangle \cong \Z/(p^n)$ for some prime $p$ and some positive integer $n$. Suppose $M$ is a $\Z_pG$-module which is free of finite rank over $\Z_p$. Then there is a short exact sequence of $\Z_pG$-modules
\begin{align*}0 \rightarrow M' \rightarrow M \rightarrow \Z_p[\zeta_{p^n}]^{\oplus r} \rightarrow 0\end{align*}
where $M'$ is a $\Z_p$-pure\footnote{Recall that if $M$ is an $R$-module ($R$ a commutative ring with $1$), we say a submodule $N\leq M$ is $R$-pure when $rM \cap N \subseteq rN$ for every $r\in R$.} $\Z_pG$-submodule of $M$ which is annihilated by $g^{p^{n-1}}-1$ and $\Z_p[\zeta_{p^n}]$ has $\Z_pG$-module structure given by
\begin{align*}\Z_p[\zeta_{p^n}] \cong \frac{\Z_pG}{\Phi_{p^n}(g)\Z_pG}\end{align*}
with $\Phi_{p^n}(x) = p^n$th cyclotomic polynomial.
\end{lemma}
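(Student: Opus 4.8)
The plan is to realize the sequence as the kernel--image decomposition of a single endomorphism of $M$. Write $R = \Z_p G$, which is commutative since $G$ is abelian, and set $\nu = g^{p^{n-1}} - 1 \in R$. Multiplication by $\nu$ is an $R$-linear endomorphism of $M$, so I would take $M' = \ker(\nu \colon M \to M)$ and $M'' = \im(\nu \colon M \to M) = \nu M$. Both are $\Z_p G$-submodules of $M$, and the first isomorphism theorem gives a short exact sequence $0 \to M' \to M \to M'' \to 0$ together with an isomorphism $M/M' \cong M''$ of $\Z_p G$-modules. By construction $M'$ is annihilated by $g^{p^{n-1}} - 1$, as required.

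Next I would check purity and identify $M''$. Since $M''$ is a submodule of the $\Z_p$-free module $M$, it is $\Z_p$-torsion-free; hence $M/M' \cong M''$ is $\Z_p$-torsion-free, which is precisely the statement that $M'$ is $\Z_p$-pure in $M$. To pin down $M''$, I would use the identity $(g^{p^{n-1}} - 1)\,\Phi_{p^n}(g) = g^{p^n} - 1 = 0$ in $R$. Therefore $\Phi_{p^n}(g)$ annihilates $\nu M = M''$, so $M''$ is naturally a module over $R/\Phi_{p^n}(g)R \cong \Z_p[\zeta_{p^n}]$.

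The crux is then a structure-theory argument. The ring $\Z_p[\zeta_{p^n}]$ is the valuation ring of the totally ramified extension $\Q_p(\zeta_{p^n})/\Q_p$, hence a discrete valuation ring, in particular a local principal ideal domain. The module $M''$ is finitely generated over $\Z_p$, and a fortiori over $\Z_p[\zeta_{p^n}]$; it is also $\Z_p[\zeta_{p^n}]$-torsion-free, since it embeds into $M'' \otimes_{\Z_p} \Q_p$, a vector space over $\Z_p[\zeta_{p^n}] \otimes_{\Z_p} \Q_p = \Q_p(\zeta_{p^n})$. A finitely generated torsion-free module over a principal ideal domain is free, so $M'' \cong \Z_p[\zeta_{p^n}]^{\oplus r}$ with $r = \dim_{\Q_p(\zeta_{p^n})}(M'' \otimes_{\Z_p}\Q_p)$, as $\Z_p[\zeta_{p^n}]$-modules and hence as $\Z_p G$-modules. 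Substituting this into the sequence above yields the claim. I expect the freeness step to be the main point: it is exactly where the locality of $\Z_p[\zeta_{p^n}]$ is essential, since over a non-local Dedekind ring torsion-freeness would only give projectivity and an ideal-class obstruction could survive. The remaining verifications---that the maps are $\Z_p G$-linear and that $\nu$ and $\Phi_{p^n}(g)$ interact as claimed---are routine once $R$ is recognized as commutative.
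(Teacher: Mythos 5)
Your proposal is correct and follows essentially the same route as the paper: both take $M'$ to be the kernel of multiplication by $g^{p^{n-1}}-1$, observe the quotient (your $M'' = (g^{p^{n-1}}-1)M$, via the first isomorphism theorem) is a finitely generated torsion-free module over the PID $\Z_p[\zeta_{p^n}]$, and conclude freeness. The only cosmetic differences are that you verify $\Z_p[\zeta_{p^n}]$-torsion-freeness by embedding into the $\Q_p(\zeta_{p^n})$-vector space $M''\otimes_{\Z_p}\Q_p$, whereas the paper uses the fact that $\Z_p \cap \Z_p[\zeta_{p^n}]\alpha$ is a nonzero ideal of $\Z_p$ for $\alpha \neq 0$; both are fine.
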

\begin{proof}
Define
\begin{align*}M':= \{m\in M : (g^{p^{n-1}}-1)m=0\}.\end{align*}
Then $M'$ is a $\Z_pG$-submodule of $M$ since it is the kernel of a $\Z_pG$-homomorphism, namely, the multiplication by $g^{p^{n-1}}-1$ map on $M$. We know $M'$ is $\Z_p$-pure since if $rm=m'$ where $r\in \Z_p$, $m\in M$, and $m' \in M'$, then
\begin{align*}r((g^{p^{n-1}}-1)m) =(g^{p^{n-1}}-1)(rm) = (g^{p^{n-1}}-1)m' = 0, \end{align*}
so $(g^{p^{n-1}}-1)m=0$ (i.e., $m\in M'$) because $M$ is $\Z_p$-torsion free. Also, $M/M'$ is annihilated by $\Phi_{p^n}(g)$ since
\begin{align*}(g^{p^{n-1}}-1)(\Phi_{p^n}(g)m) = ((g^{p^{n-1}}-1)(\Phi_{p^n}(g))m = (g^{p^n}-1)m = 0 \end{align*}
for all $m \in M$. Thus $M/M'$ is a $\Z_p[\zeta_{p^n}]$-module which (since $M' \leq M$ is $\Z_p$-pure and $\Z_p$ is a PID) is free of finite rank over $\Z_p$. Note that $\Z_p \cap \Z_p[\zeta_{p^n}]\alpha$ is a non-zero ideal of $\Z_p$ when $0\neq \alpha \in \Z_p[\zeta_{p^n}]$. Thus if $\alpha \overline{m} = 0$ for some $\overline{m} \in M/M'$, then $r\overline{m} = \beta(\alpha\overline{m}) = 0$ where $0\neq r= \beta\alpha \in \Z_p$ for some $\beta \in \Z_p[\zeta_{p^n}]$. This implies $\overline{m} = 0$ because $M/M'$ is $\Z_p$-free. Hence $M/M'$ is torsion free as a $\Z_p[\zeta_{p^n}]$-module; moreover, $M/M'$ is finitely generated over $\Z_p[\zeta_{p^n}]$ since it is finitely generated over $\Z_p$. Therefore $M/M'$ is free of finite rank over $\Z_p[\zeta_{p^n}]$ since $\Z_p[\zeta_{p^n}]$ is a PID.
\end{proof}
\begin{lemma}\label{work}
Let $G = \langle g \rangle \cong \Z/(p^n)$ for some prime $p$ and some nonnegative integer $n$. Suppose $M$ is a $\Z_pG$-module which is free of finite rank over $\Z_p$. Then there is a sequence $r_0, \ldots, r_n$ of nonnegative integers such that for every subgroup $N_i = \langle g^{p^i} \rangle$ with $0\leq i \leq n$ we have
\begin{align*}\rank_{\Z_p}(M^{N_i}) = \sum_{t=0}^{i} r_t \varphi(p^t)  \end{align*}
and
\begin{align*}\chi(N_i, M) = (n-i)\sum_{t=0}^i r_t \varphi(p^t) - p^i \sum_{t=i+1}^n r_t. \end{align*}
\end{lemma}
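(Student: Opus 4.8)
The plan is to reduce both formulas to a computation on the ``irreducible pieces'' $L_t := \Z_p[\zeta_{p^t}]$ for $0 \le t \le n$, viewed as $\Z_pG$-modules via $g \mapsto \zeta_{p^t}$ exactly as in Lemma \ref{Rei}, and then to produce the sequence $r_0,\ldots,r_n$ by a rational decomposition. First I would tensor with $\Q_p$: the group algebra factors by the Chinese Remainder Theorem as $\Q_pG \cong \prod_{t=0}^n \Q_p(\zeta_{p^t})$, a product of fields, since each $\Phi_{p^t}$ is irreducible over $\Q_p$ (as $\Q_p(\zeta_{p^t})/\Q_p$ is totally ramified of degree $\varphi(p^t)$). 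Hence the finite-dimensional module $V := M \otimes_{\Z_p}\Q_p$ is semisimple and splits as $V \cong \bigoplus_{t=0}^n \Q_p(\zeta_{p^t})^{\oplus r_t}$ for unique integers $r_t \ge 0$; one reaches the same $r_t$ by iterating Lemma \ref{Rei} to peel off the top factor $L_n^{\oplus r_n}$ and inducting on the pure $\Z_p$-submodule $M'$, which is a lattice over $\Z_p[G/\langle g^{p^{n-1}}\rangle]$.

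For the rank formula I would observe that, $M$ being $\Z_p$-free, an element $m \in M$ is fixed by $N_i$ as soon as $m\otimes 1 \in V$ is, so $M^{N_i}\otimes\Q_p = V^{N_i}$ and $\rank_{\Z_p}(M^{N_i}) = \dim_{\Q_p}V^{N_i}$. On the factor $\Q_p(\zeta_{p^t})$ the generator $g^{p^i}$ acts as multiplication by $\zeta_{p^t}^{p^i}$, which equals $1$ exactly when $t\le i$ and is a root of unity $\neq 1$ (hence invertible, with no nonzero fixed vector) when $t>i$. Thus $V^{N_i} = \bigoplus_{t=0}^i \Q_p(\zeta_{p^t})^{\oplus r_t}$ and $\rank_{\Z_p}(M^{N_i}) = \sum_{t=0}^i r_t\varphi(p^t)$, as claimed.

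For the Euler characteristic I would first record that $\chi(N_i,-)$ is a rational invariant: any two $\Z_pN_i$-lattices spanning the same $\Q_pN_i$-module are commensurable, so they differ by submodules of finite index, and since $\chi$ is additive and vanishes on finite modules it depends only on $V$. Choosing the lattice $\bigoplus_t L_t^{\oplus r_t}$ then gives $\chi(N_i,M) = \sum_{t=0}^n r_t\,\chi(N_i,L_t)$, so it remains to compute $\chi(N_i,L_t)$. Using that for a cyclic group $H^2 \cong \hat H^0 = L_t^{N_i}/N_\sigma L_t$ and $H^1 \cong \hat H^{-1} = \ker(N_\sigma)/(\sigma-1)L_t$ with $\sigma = g^{p^i}$, I split into two cases. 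When $t\le i$ the action is trivial, the norm is multiplication by $|N_i| = p^{n-i}$, so $H^1 = 0$ and $H^2 = L_t/p^{n-i}L_t$ has order $p^{(n-i)\varphi(p^t)}$, giving $\chi(N_i,L_t) = (n-i)\varphi(p^t)$. When $t>i$, $\sigma$ acts as the primitive $p^{t-i}$-th root of unity $\eta = \zeta_{p^t}^{p^i}$, so the norm sums a nontrivial root of unity over full periods and vanishes; then $H^2 = \ker(\eta-1) = 0$ while $H^1 = L_t/(\eta-1)L_t$.

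The crux, and the step I expect to be the main obstacle, is evaluating $|L_t/(\eta-1)L_t|$ in this second case. Here I would use that $\Q_p(\zeta_{p^t})/\Q_p$ is totally ramified with residue field $\F_p$ and uniformizer $\zeta_{p^t}-1$, so the normalized valuation $w$ satisfies $w(\zeta_{p^s}-1) = p^{t-s}$ for $1\le s\le t$; applied to $s = t-i$ this yields $w(\eta-1) = p^i$ and hence $|L_t/(\eta-1)L_t| = p^{p^i}$, so $\chi(N_i,L_t) = -p^i$. Summing the two cases gives
\begin{align*}
\chi(N_i,M) = \sum_{t=0}^i r_t(n-i)\varphi(p^t) - \sum_{t=i+1}^n r_t\,p^i = (n-i)\sum_{t=0}^i r_t\varphi(p^t) - p^i\sum_{t=i+1}^n r_t,
\end{align*}
which is the asserted identity; the degenerate case $n=0$ (trivial $G$, empty second sum, $\chi=0$) is immediate.
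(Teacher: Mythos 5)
Your proof is correct, but it takes a genuinely different route from the paper's own proof of Lemma \ref{work}. The paper argues by induction on $n$: Lemma \ref{Rei} gives a short exact sequence $0 \to M' \to M \to \Z_p[\zeta_{p^n}]^{\oplus r_n} \to 0$ with $M'$ inflated from the quotient $G' = G/N_{n-1}$, and the inductive step hinges on the comparison $\chi(N_i,M') - \chi(N_i',M') = \rank_{\Z_p}(M'^{N_i})$, proved by noting that the $N_i$-norm acts on $M'$ as $p$ times the $N_i'$-norm, so that $H^1$ is unchanged while $H^2$ grows by $M'^{N_i}/pM'^{N_i}$. You instead work rationally: split $V = M\otimes_{\Z_p}\Q_p$ via the Chinese Remainder Theorem for $\Q_pG \cong \prod_{t=0}^n \Q_p(\zeta_{p^t})$, observe that both $\rank_{\Z_p}$ of fixed points and $\chi(N_i,-)$ are commensurability invariants of lattices (additivity plus vanishing on finite modules), and reduce everything to explicit computations on the standard lattices $\Z_p[\zeta_{p^t}]$. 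This is essentially the paper's own \emph{alternative} proof in Section 4 (Lemma \ref{Lambda} together with Remark \ref{regift}), except that where the paper invokes the structure theorem for finitely generated $\Lambda$-modules (Theorem \ref{lambdastructure}) to produce an injection $M \rightarrowtail \bigoplus_t \Z_p[\zeta_{p^t}]^{\oplus r_t}$ with finite cokernel, you obtain the same reduction more cheaply from semisimplicity of $\Q_pG$ and the elementary fact that two lattices spanning the same rational representation are commensurable; your local computations ($\chi(N_i,\Z_p[\zeta_{p^t}]) = (n-i)\varphi(p^t)$ for $t\le i$, and $-p^i$ for $t>i$ via the normalized valuation $w(\zeta_{p^t}^{p^i}-1)=p^i$) coincide exactly with those in Remark \ref{regift}. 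What the paper's inductive proof buys is integral structure information---the $\Z_p$-pure submodule $M'$ of Lemma \ref{Rei}---which is reused elsewhere (e.g.\ in the Remark following Corollary \ref{gencong} and in Theorem \ref{genrep}); what your proof buys is a short, self-contained argument that needs neither the induction nor the $\Lambda$-module structure theorem.
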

\begin{proof}
We use induction on $n$ and Lemma \ref{Rei}. If $n=0$, then the proposition is clear with $\rank_{\Z_p}(M^{N_0})=\rank_{\Z_p}(M) = r_0$. Now suppose $n \geq 1$ and the proposition is true for $n-1$. By Lemma \ref{Rei}, we have a short exact sequence of $\Z_pG$-modules
\begin{align*} 0 \rightarrow M' \rightarrow M \rightarrow \Z_p[\zeta_{p^n}]^{\oplus r_n} \rightarrow 0 \end{align*}
where $M'$ can be regarded as a $\Z_pG'$-module where $G' = G/N_{n-1} \cong \Z/(p^{n-1})$. By induction, there is a sequence $r_0, \ldots, r_{n-1}$ of nonnegative integers such that for every subgroup $N'_i = N_i/N_{n-1} \leq G'$ with $0 \leq i \leq n-1$ we have
\begin{align*}\rank_{\Z_p}(M'^{N'_i}) =  \sum_{t=0}^{i} r_t \varphi(p^t) \end{align*}
and
\begin{align*} \chi(N'_i, M') = (n-1-i)\sum_{t=0}^i r_t \varphi(p^t) - p^i \sum_{t=i+1}^{n-1} r_t \end{align*}
since $\Z_p[\zeta_{p^n}]^{N_{i}}=0$. We need to compute the difference $\chi(N_i, M') - \chi(N'_i, M')$. Since $(g^{p^i})^{p^{n-i-1}} = g^{p^{n-1}}$ acts as $1$ on $M'$, we know that $1+g^{p^i}+\cdots +(g^{p^i})^{p^{n-i}-1}$ acts as $p(1+g^{p^i}+\cdots +(g^{p^i})^{p^{n-i-1}-1})$ on $M'$. Thus $H^1(N_i,M') = H^1(N_i',M')$ since $M'$ is $\Z_p$-free, and the multiplication by $p$ map $H^2(N'_i, M') \rightarrow H^2(N_i, M')$ is an injection with cokernel $M'^{N_i}/pM'^{N_i}$. Hence
\begin{align*}
\chi(N_i, M') - \chi(N'_i, M')&= |M'^{N_i}/pM'^{N_i}|=\rank_{\Z_p}(M'^{N_i}) \\
&= \rank_{\Z_p}(M'^{N'_i}) =  \sum_{t=0}^{i} r_t \varphi(p^t) ,\end{align*}
so
\begin{align*}
\chi(N_i, M) &= \chi(N_i, M') + r_n\chi(N_i, \Z_p[\zeta_{p^n}]) \\
&= \chi(N'_i, M') + \sum_{t=0}^{i} r_t \varphi(p^t) +  r_n\chi(N_i, \Z_p[\zeta_{p^n}]) \\
&= (n-i)\sum_{t=0}^i r_t \varphi(p^t) - p^i \sum_{t=i+1}^{n-1} r_t + r_n\chi(N_i, \Z_p[\zeta_{p^n}]),
\end{align*}
but $H^2(N_i, \Z_p[\zeta_{p^n}]) = 0$ and
\begin{align*}H^1(N_i, \Z_p[\zeta_{p^n}]) = \frac{\Z_p[\zeta_{p^n}]}{(\zeta_{p^n}^{p^i} - 1)} \cong \frac{\Z_p[x]}{(x^{p^i} -1)+(\Phi_{p^n}(x))} \cong \frac{\Z_p[\Z/(p^i)]}{(\Phi_{p^n}(1))} = \frac{\Z_p[\Z/(p^i)]}{(p)},  \end{align*}
so $\chi(N_i, \Z_p[\zeta_{p^n}]) = -p^i$ as needed. Also, it is clear that $\chi(N_n, M) = 0$ and
\begin{align*}
\rank_{\Z_p}(M^{N_n}) &= \rank_{\Z_p}(M) \\
&= \rank_{\Z_p}(M') + r_n\rank_{\Z_p}(\Z[\zeta_{p^n}]) \\
&= \sum_{t=0}^{n-1} r_t \varphi(p^t) + r_n\varphi(p^n),
\end{align*}
which finishes the proof.
\end{proof}
Now we compute the $n+1$ unknown $r_i$'s in Lemma \ref{work} in terms of $n+2$ arithmetic invariants consisting of Iwasawa lambda invariants and Euler characteristics of class groups.
\begin{thm}\label{above}
Let $p$ be prime and $K_0 \subseteq K_1 \subseteq \ldots \subseteq K_n$ be a tower of $\Z_p$-fields such that $n\geq 1$ and for all $i$ the extension $K_i/K_0$ is cyclic of degree $p^i$. Suppose $\mu_{K_0} = 0$. Then $\mu_{K_1} = \cdots  = \mu_{K_n} = 0$ and
\begin{align*} \sum_{i=0}^{n-1} \varphi(p^i)\lambda_{K_{n-i}}  = p^{n-1}(np-n+1)\lambda_{K_0} + \varphi(p^n)\chi(G_n, A_{K_n})  \end{align*}
where $G_n = \Gal(K_n/K_0)$.
\end{thm}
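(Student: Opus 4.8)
The plan is to feed the Pontryagin dual of $A_{K_n}$ into Lemma \ref{work} and then to recognize the abstract integers $r_0,\dots,r_n$ produced there in terms of the $\lambda$-invariants of the intermediate fields. First I would settle the $\mu$-invariants. Because $G_n=\Gal(K_n/K_0)$ is cyclic, each one-step extension $K_{j-1}\subseteq K_j$ is cyclic of degree $p$; applying the $\mu=0$ ascent of Theorem \ref{tm} (for odd $p$ there is no ramification at the infinite places, and the general case is Iwasawa's) successively up the tower propagates $\mu_{K_0}=0$ to $\mu_{K_1}=\cdots=\mu_{K_n}=0$.

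Since $\mu_{K_n}=0$, the structure theorem gives $A_{K_n}\cong(\Q_p/\Z_p)^{\lambda_{K_n}}$, so $M\mathrel{\mathop:}=\Hom_{\Z_p}(A_{K_n},\Q_p/\Z_p)$ is a $\Z_pG_n$-module that is free of rank $\lambda_{K_n}$ over $\Z_p$. I would apply Lemma \ref{work} to this $M$ with $G=G_n$ and $N_i=\langle g^{p^i}\rangle=\Gal(K_n/K_i)$ (so $K_n^{N_i}=K_i$), obtaining nonnegative integers $r_0,\dots,r_n$ with $\rank_{\Z_p}(M^{N_i})=\sum_{t=0}^i r_t\varphi(p^t)$ and the displayed value of $\chi(N_i,M)$.

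The heart of the argument, and the step I expect to be the main obstacle, is the identification $\rank_{\Z_p}(M^{N_i})=\lambda_{K_i}$, which I would prove after tensoring with $\Q_p$. The norm (corestriction) map $\nu\colon A_{K_n}\to A_{K_i}$ and the extension-of-ideals map $\iota\colon A_{K_i}\to A_{K_n}$ satisfy $\nu\circ\iota=p^{n-i}$ and $\iota\circ\nu=\sum_{h\in N_i}h$. Because $\nu$ factors through the $N_i$-coinvariants, its dual $\nu^{\ast}$ carries $A_{K_i}^{\ast}$ into $M^{N_i}$; dualizing the two composite relations and tensoring with $\Q_p$ shows that $\nu^{\ast}$ and $\iota^{\ast}$ restrict to maps between $M^{N_i}\otimes\Q_p$ and $A_{K_i}^{\ast}\otimes\Q_p$ whose composites (in both orders) are multiplication by $p^{n-i}$, hence isomorphisms, so these spaces have equal dimension. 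As $\mu_{K_i}=0$ gives $\dim_{\Q_p}(A_{K_i}^{\ast}\otimes\Q_p)=\lambda_{K_i}$, I conclude $\lambda_{K_i}=\sum_{t=0}^i r_t\varphi(p^t)$ for $0\le i\le n$; in particular $r_0=\lambda_{K_0}$. The delicate point is making the $N_i$-equivariance and the coinvariant factorization precise, since this is where the class-field-theoretic input (capitulation and ramification of the finite layers) is absorbed into finite, rank-zero error terms.

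Finally I would eliminate the $r_t$. Taking $i=0$ in Lemma \ref{work} and using property (3) of the Euler characteristic gives $\chi(G_n,A_{K_n})=-\chi(G_n,M)=\sum_{t=1}^n r_t-n\,r_0$. Re-indexing the left-hand side of the asserted identity by $j=n-i$ and substituting $\lambda_{K_j}=\sum_{t=0}^j r_t\varphi(p^t)$, the telescoping identity $\sum_{k=0}^m\varphi(p^k)=p^m$ together with $\varphi(p^t)p^{n-t}=\varphi(p^n)$ for $t\ge 1$ collapses the double sum to $p^{n-1}r_0+\varphi(p^n)\sum_{t=1}^n r_t$. A direct check, using $\varphi(p^n)=p^{n-1}(p-1)$, shows this equals $p^{n-1}(np-n+1)\lambda_{K_0}+\varphi(p^n)\chi(G_n,A_{K_n})$, which is exactly the stated formula; the case $n=1$ recovers formula (\ref{above1}).
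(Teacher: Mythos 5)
Your proof is correct, and its skeleton coincides with the paper's: propagate $\mu=0$ up the tower via Theorem \ref{tm}, apply Lemma \ref{work} to the $\Z_pG_n$-module $A_{K_n}^{\ast}$, identify $\rank_{\Z_p}\bigl((A_{K_n}^{\ast})^{N_i}\bigr)$ with $\lambda_{K_i}$, and eliminate the $r_t$ by exactly the telescoping computation you describe (your collapse of the double sum to $p^{n-1}r_0+\varphi(p^n)\sum_{t=1}^n r_t$ is the paper's computation verbatim). The one step where you genuinely depart from the paper is the one you flag as the heart of the argument. The paper proves $\rank_{\Z_p}\bigl((A_{K_n}^{\ast})^{N_i}\bigr)=\lambda_{K_i}$ by noting that the natural map $C_{K_i}\rightarrow C_{K_n}^{N_i}$ has finite kernel and cokernel (snake lemma), then dualizing, using that the dual of invariants is the coinvariants of the dual and that invariants and coinvariants of a finitely generated $\Z_p$-module under a finite cyclic action have equal $\Z_p$-rank. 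You instead exploit the composite relations $\nu\circ\iota=p^{n-i}$ and $\iota\circ\nu=\sum_{h\in N_i}h$ for the norm and extension maps on ideal classes: after dualizing and tensoring with $\Q_p$, both composites become isomorphisms (the norm element acts as $p^{n-i}$ on $(A_{K_n}^{\ast})^{N_i}$), so $\nu^{\ast}$ induces an isomorphism $A_{K_i}^{\ast}\otimes\Q_p\cong(A_{K_n}^{\ast})^{N_i}\otimes\Q_p$ and the ranks agree. This is sound and somewhat more self-contained: it avoids any analysis of the capitulation kernel and of the cokernel of $C_{K_i}\to C_{K_n}^{N_i}$, at the cost of verifying that the norm map and these two relations persist on the limit class groups of $\Z_p$-fields (finite-level compatibility in the cyclotomic tower), which is essentially the same bookkeeping that the paper's snake-lemma assertion encapsulates. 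One correction to your commentary: the remark that class-field-theoretic input is ``absorbed into finite, rank-zero error terms'' describes the paper's route rather than yours; in your argument the two composite relations are exact identities and no error terms arise.
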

\begin{proof}
Theorem \ref{tm} implies $\mu_{K_1} = \cdots  = \mu_{K_n} = 0$ by induction. We apply Lemma \ref{work} to the $\Z_pG_n$-module $A_{K_n}^{\ast}$ (the $p$-Pontryagin dual of the $p$-primary part of the class group), which is free of finite rank $\lambda_{K_n}$ over $\Z_p$. Thus there is a sequence of nonnegative integers $r_0, r_1, \ldots, r_n$ such that for all $i = 0, 1, \ldots, n$ we have
\begin{align*}
\lambda_{K_i} &= \rank_{\Z_p}(A_{K_i}^{\ast}) = \rank_{\Z_p}((A_{K_n}^{\ast})^{N_i}) = \sum_{t=0}^i r_t \varphi(p^t) \\
\chi(G_n, A_{K_n}) &= -\chi(N_0, A_{K_n}^{\ast}) = -nr_0 + \sum_{t=1}^n r_t
\end{align*}
where $N_i = \Gal(K_n/K_i)$. Note that the natural map $C_{K_i} \rightarrow C_{K_n}^{N_i}$ has finite kernel and cokernel by the snake lemma, so indeed
\begin{align*}
\rank_{\Z_p}(A_{K_i}^{\ast}) = \rank_{\Z_p}((A_{K_n}^{N_i})^{\ast}) = \rank_{\Z_p}((A_{K_n}^{\ast})_{N_i}) = \rank_{\Z_p}((A_{K_n}^{\ast})^{N_i}).
\end{align*}
Hence
{\allowdisplaybreaks
\begin{align*}
\sum_{i=0}^{n-1} \varphi(p^i)\lambda_{K_{n-i}} &= \sum_{i=0}^{n-1} \sum_{t=0}^{n-i} r_t \varphi(p^i)\varphi(p^t) \\
&= \sum_{i=0}^{n-1} \varphi(p^i)r_0 + \sum_{t=1}^n \sum_{i=0}^{n-t} \varphi(p^i)\varphi(p^t) r_t \\
&= \left(1+ (p-1)\sum_{j=0}^{n-2} p^j \right)r_0 + \sum_{t=1}^n r_t \varphi(p^t)\left(1 + (p-1)\sum_{j=0}^{n-t-1} p^j \right)  \\
&=  p^{n-1}r_0 + \varphi(p^n)(r_1+ \cdots +r_n) \\
&= p^{n-1}(np-n+1)r_0 + \varphi(p^n)(-nr_0 + r_1 + \cdots + r_n) \\
&=  p^{n-1}(np-n+1)\lambda_{K_0} + \varphi(p^n)\chi(G_n, A_{K_n})
\end{align*}}
which finishes the proof.
\end{proof}
\begin{corollary}\label{lazyname}
Let $p$ be prime and $K_0 \subseteq K_1 \subseteq \ldots \subseteq K_n$ be a tower of $\Z_p$-fields such that for all $i$ the extension $K_i/K_0$ is cyclic of degree $p^i$. Suppose $\mu_{K_0} = 0$. Then
\begin{align*} \lambda_{K_n} = p^n\lambda_{K_0} + \varphi(p^n)\chi(G_n, A_{K_n}) - (p-1) \sum_{i=1}^{n-1} \varphi(p^i)\chi(G_i, A_{K_i}) \end{align*}
where $G_i = \Gal(K_i/K_0)$.
\end{corollary}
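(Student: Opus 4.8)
The plan is to derive the corollary formally from Theorem \ref{above}, applied not only to the full tower but to each of its initial segments. For every $m$ with $1 \le m \le n$ the sub-tower $K_0 \subseteq K_1 \subseteq \cdots \subseteq K_m$ still satisfies the hypotheses of Theorem \ref{above}: each $K_i/K_0$ is cyclic of degree $p^i$ and $\mu_{K_0}=0$ is assumed. Writing $\chi_m := \chi(G_m, A_{K_m})$ and reindexing the left-hand sum by $j = m-i$, Theorem \ref{above} therefore supplies the family of identities
\begin{align*}
(\star_m)\colon\qquad \sum_{j=1}^{m}\varphi(p^{m-j})\lambda_{K_j} = p^{m-1}(mp-m+1)\lambda_{K_0} + \varphi(p^m)\chi_m, \qquad 1 \le m \le n.
\end{align*}
The whole corollary is then a single linear combination of these.

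The combination to take is $(\star_n) - (p-1)\sum_{m=1}^{n-1}(\star_m)$; equivalently, I assign weight $a_n = 1$ to the top identity and weight $a_m = -(p-1)$ to each lower one. On the combined left-hand side the coefficient of $\lambda_{K_j}$ is $\sum_{m=j}^{n} a_m \varphi(p^{m-j})$, and I would show this vanishes for $1 \le j \le n-1$ and equals $1$ for $j = n$. Both facts follow from the elementary identity $\sum_{k=0}^{K}\varphi(p^k) = p^K$: for $j<n$ the coefficient becomes $\varphi(p^{n-j}) - (p-1)\sum_{k=0}^{n-1-j}\varphi(p^k) = \varphi(p^{n-j}) - (p-1)p^{n-1-j} = 0$, while the $j=n$ term is just $a_n\varphi(p^0)=1$. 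Hence the left-hand side collapses to $\lambda_{K_n}$, and every intermediate invariant $\lambda_{K_1},\ldots,\lambda_{K_{n-1}}$ disappears.

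On the right-hand side the cohomology terms immediately assemble as $\varphi(p^n)\chi_n - (p-1)\sum_{m=1}^{n-1}\varphi(p^m)\chi_m$, which is exactly the expression appearing in the corollary, so it remains only to check that the $\lambda_{K_0}$-coefficient equals $p^n$, i.e.\ that
\begin{align*}
p^{n-1}(np-n+1) - (p-1)\sum_{m=1}^{n-1}p^{m-1}(mp-m+1) = p^n.
\end{align*}
Here I would use the telescoping $p^{m-1}(mp-m+1) = mp^m - (m-1)p^{m-1}$, which collapses the inner sum to $(n-1)p^{n-1}$; substituting and simplifying $np-n+1-(p-1)(n-1) = p$ then leaves exactly $p^n$. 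Equating the two sides of the combined identity produces the stated formula.

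The one genuinely non-obvious step is recognizing that the weight $-(p-1)$ on each lower identity is precisely what telescopes away the intermediate $\lambda_{K_j}$; once that weighting is in hand, everything reduces to bookkeeping with the sums $\sum_k \varphi(p^k)=p^K$ and the telescoping identity for $p^{m-1}(mp-m+1)$, so I expect no real obstacle beyond keeping these index manipulations straight. (As a sanity check one can confirm the $n=1$ case is literally $(\star_1)$ and the $n=2$ case recovers $\lambda_{K_2} = p^2\lambda_{K_0} + \varphi(p^2)\chi_2 - (p-1)^2\chi_1$.)
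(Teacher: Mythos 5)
Your proposal is correct---the reindexing of Theorem \ref{above}, the coefficient collapse via $\sum_{k=0}^{K}\varphi(p^k)=p^K$, and the telescoping $p^{m-1}(mp-m+1)=mp^m-(m-1)p^{m-1}$ all check out---and it takes essentially the same route as the paper: both arguments deduce the corollary purely formally by linearly combining the identities of Theorem \ref{above} applied to the initial segments $K_0 \subseteq \cdots \subseteq K_m$, $1 \le m \le n$, of the tower. The only difference is organizational: the paper eliminates the intermediate invariants inductively, subtracting $p$ times the identity for $K_{m-1}/K_0$ from the one for $K_m/K_0$ to obtain the difference equation (\ref{use}), namely $\lambda_{K_m}-\lambda_{K_{m-1}} = \varphi(p^m)\left(\lambda_{K_0} + \chi(G_m,A_{K_m}) - \chi(G_{m-1},A_{K_{m-1}})\right)$, whereas you eliminate them all at once with the single weighting $(1,-(p-1),\ldots,-(p-1))$.
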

\begin{proof}
It is clear that the statement holds when $n=0$ and also when $n=1$ by Iwasawa's formula, so we may assume $n \geq 2$ and that the formula is valid for $n-1$. Then by application of Theorem \ref{above} to $K_n/K_0$ and $K_{n-1}/K_0$ we get
\begin{align}\label{use}
\lambda_{K_n}-\lambda_{K_{n-1}} = \varphi(p^n)(\lambda_{K_0} + \chi(G_n,A_{K_n}) - \chi(G_{n-1},A_{K_{n-1}})),
\end{align}
and, in fact, this equation holds for $n=1$ as well. Induction yields
\begin{align*}
\lambda_{K_n} &= \varphi(p^n)(\lambda_{K_0} + \chi(G_n,A_{K_n}) - \chi(G_{n-1},A_{K_{n-1}})) \\
&{\hspace{0.2 in}} + p^{n-1}\lambda_{K_0} + \varphi(p^{n-1})\chi(G_{n-1}, A_{K_{n-1}}) - (p-1) \sum_{i=1}^{n-2} \varphi(p^i)\chi(G_i, A_{K_i}) \\
&= p^n\lambda_{K_0} + \varphi(p^n)\chi(G_n,A_{K_n}) - (p-1) \sum_{i=1}^{n-1} \varphi(p^i)\chi(G_i, A_{K_i})
\end{align*}
as needed.
\end{proof}
\begin{corollary}
\label{generalprop}
Let $p$ be prime and $K_0 \subseteq K_1 \subseteq \ldots \subseteq K_n$ be a tower of $\Z_p$-fields such that for all $i$ the extension $K_i/K_0$ is cyclic of degree $p^i$. Suppose $\mu_{K_0} = 0$. Then
\begin{align*}p^{n-1} \chi(G_n, A_{K_n}) = \sum_{i=1}^{n-1} \varphi(p^i)\chi(G_i, A_{K_i}) + \sum_{i=1}^n p^{n-i}\chi(N_{i-1}/N_i, A_{K_i}). \end{align*}
where $N_i = \Gal(K_n/K_i)$ and again $G_i = \Gal(K_i/K_0)$.
\end{corollary}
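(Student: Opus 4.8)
The plan is to convert the single-step Euler characteristics $\chi(N_{i-1}/N_i, A_{K_i})$ in the second sum into differences of $\lambda$-invariants, telescope, and then recognize the outcome through Corollary \ref{lazyname}. One could instead grind the identity out directly from the integers $r_0,\ldots,r_n$ of Lemma \ref{work} (using $\chi(G_n,A_{K_n}) = -nr_0 + \sum_{t\geq 1} r_t$ and the analogous expressions for the $\chi(G_i,A_{K_i})$), but routing through the already-established $\lambda$-formula avoids re-expanding all those binomial-type sums. I will assume $n\geq 1$; the case $n=1$ is the tautology $\chi(G_1,A_{K_1}) = \chi(N_0/N_1,A_{K_1})$, since $N_0/N_1 = G_1$.

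First I would observe that $N_{i-1}/N_i = \Gal(K_n/K_{i-1})/\Gal(K_n/K_i) \cong \Gal(K_i/K_{i-1})$ is cyclic of order $p$, and that $\mu_{K_{i-1}} = 0$ for each $i$ by Theorem \ref{tm}, bootstrapping from $\mu_{K_0}=0$ up the tower. Hence Iwasawa's formula in the form of Eq. (\ref{above1}) applies to each single step $K_i/K_{i-1}$ and gives
\begin{align*}
(p-1)\,\chi(N_{i-1}/N_i, A_{K_i}) = \lambda_{K_i} - p\,\lambda_{K_{i-1}}.
\end{align*}

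Next I would multiply the target second sum by $p-1$ and substitute the line above:
\begin{align*}
(p-1)\sum_{i=1}^n p^{n-i}\,\chi(N_{i-1}/N_i, A_{K_i}) = \sum_{i=1}^n p^{n-i}\lambda_{K_i} - \sum_{i=1}^n p^{n-i+1}\lambda_{K_{i-1}}.
\end{align*}
Reindexing the second sum by $j = i-1$ recasts it as $\sum_{j=0}^{n-1} p^{n-j}\lambda_{K_j}$, so every interior term $p^{n-m}\lambda_{K_m}$ with $1\leq m\leq n-1$ cancels against its counterpart in the first sum, leaving exactly $\lambda_{K_n} - p^n\lambda_{K_0}$. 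Corollary \ref{lazyname} identifies this endpoint quantity as $\varphi(p^n)\chi(G_n, A_{K_n}) - (p-1)\sum_{i=1}^{n-1}\varphi(p^i)\chi(G_i, A_{K_i})$. Dividing through by $p-1$, using $\varphi(p^n)/(p-1) = p^{n-1}$, and moving the $\chi(G_i,A_{K_i})$ sum to the other side yields the claimed identity.

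I do not anticipate a genuine obstacle: once the single-step formula and Corollary \ref{lazyname} are in hand, this is an exact manipulation in $\Z$. The only point demanding care is the telescoping bookkeeping—keeping the powers of $p$ correctly aligned after the shift $j=i-1$, so that precisely the $\lambda_{K_0}$ and $\lambda_{K_n}$ endpoints survive and the division by $p-1$ leaves integer coefficients as it must.
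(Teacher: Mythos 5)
Your proof is correct and is essentially the paper's own argument: the paper likewise rearranges Corollary \ref{lazyname} and identifies $\frac{\lambda_{K_n}-p^n\lambda_{K_0}}{p-1}$ with $\sum_{i=1}^n p^{n-i}\chi(N_{i-1}/N_i,A_{K_i})$ by ``induction on Iwasawa's formula,'' which is exactly your telescoping of the single-step identities $(p-1)\chi(N_{i-1}/N_i,A_{K_i})=\lambda_{K_i}-p\lambda_{K_{i-1}}$ written out explicitly. Your citation of Eq.~(\ref{above1}) rather than (\ref{Iwasform}) is if anything the more careful choice, since the corollary's hypotheses make no assumption about ramification at infinite places.
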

\begin{proof}
We have
\begin{align*}
p^{n-1}\chi(G_n, A_{K_n}) &=  \sum_{i=1}^{n-1} \varphi(p^i)\chi(G_i, A_{K_i}) + \frac{\lambda_{K_n} - p^n\lambda_{K_0}}{p-1} \\
&= \sum_{i=1}^{n-1} \varphi(p^i)\chi(G_i, A_{K_i}) + \sum_{i=1}^n p^{n-i}\chi(N_{i-1}/N_i, A_{K_i})
\end{align*}
where the first equality follows from Corollary \ref{lazyname} and the second equality follows from induction on Iwasawa's formula (\ref{Iwasform}).
\end{proof}
\begin{corollary}\label{gencong}
Let $p$ be prime and $K_0 \subseteq K_1 \subseteq \ldots \subseteq K_n$ be a tower of $\Z_p$-fields such that for all $i$ the extension $K_i/K_0$ is cyclic of degree $p^i$. As above, write $G_i = \Gal(K_i/K_0)$, $N_i = \Gal(K_n/K_i)$. Suppose $\mu_{K_0} = 0$. Then
\begin{enumerate}
\item for every $i=0, \ldots, n$
\begin{align*}\lambda_{K_n} \equiv \lambda_{K_i} \pmod{\varphi(p^{i+1})}\end{align*}
\item also,
\begin{align*}
-n\lambda_{K_0}\leq \chi(G_n, A_{K_n})
\end{align*}
\end{enumerate}
\end{corollary}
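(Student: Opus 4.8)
The plan is to reduce both parts to the structural decomposition already extracted in the proof of Theorem \ref{above}. There, Lemma \ref{work} is applied to the $\Z_pG_n$-module $M = A_{K_n}^{\ast}$, which is free of finite rank $\lambda_{K_n}$ over $\Z_p$; this produces a sequence of \emph{nonnegative} integers $r_0, r_1, \ldots, r_n$ for which
\begin{align*}
\lambda_{K_i} = \sum_{t=0}^{i} r_t \varphi(p^t) \qquad \text{and} \qquad \chi(G_n, A_{K_n}) = -n r_0 + \sum_{t=1}^{n} r_t,
\end{align*}
and in particular $\lambda_{K_0} = r_0$. Everything below is an immediate consequence of these two identities together with the nonnegativity of the $r_t$.

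For part (1), I would subtract the formula for $\lambda_{K_i}$ from that for $\lambda_{K_n}$ to obtain
\begin{align*}
\lambda_{K_n} - \lambda_{K_i} = \sum_{t=i+1}^{n} r_t \varphi(p^t),
\end{align*}
and then observe that every term on the right is divisible by $\varphi(p^{i+1})$. This rests only on the elementary fact that $\varphi(p^{i+1}) \mid \varphi(p^t)$ whenever $t \geq i+1$: writing $\varphi(p^s) = p^{s-1}(p-1)$ for $s \geq 1$, the quotient $\varphi(p^t)/\varphi(p^{i+1})$ is the integer $p^{t-i-1}$. The case $i = n$ is vacuous (the sum is empty), and the case $i = 0$ recovers the congruence modulo $p-1$. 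Hence $\lambda_{K_n} \equiv \lambda_{K_i} \pmod{\varphi(p^{i+1})}$ for all $i$.

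For part (2), since each $r_t$ is nonnegative the tail sum $\sum_{t=1}^{n} r_t$ is $\geq 0$, so
\begin{align*}
\chi(G_n, A_{K_n}) = -n r_0 + \sum_{t=1}^{n} r_t \geq -n r_0 = -n \lambda_{K_0},
\end{align*}
which is exactly the claimed inequality.

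There is no genuine obstacle here: both statements fall out of the fact that the $r_t$ are nonnegative integers, a fact already supplied by Lemma \ref{work}. The only points requiring a line of verification are the divisibility $\varphi(p^{i+1}) \mid \varphi(p^t)$ for $t \geq i+1$ in part (1) and the sign bookkeeping in the expression for $\chi(G_n, A_{K_n})$ in part (2). As a cross-check for part (1), one could instead apply Theorem \ref{above} to each pair $K_{j+1}/K_0$, $K_j/K_0$ to get $\lambda_{K_{j+1}} \equiv \lambda_{K_j} \pmod{\varphi(p^{j+1})}$ and then telescope, using $\varphi(p^{i+1}) \mid \varphi(p^{j+1})$ for $j \geq i$; this avoids re-invoking the $r_t$ but is less direct.
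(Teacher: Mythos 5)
Your proof is correct, but it takes a more direct route than the paper's. The paper treats Corollary \ref{gencong} as a formal consequence of its previously stated results: part (1) is deduced from Eq.\ (\ref{use}) in the proof of Corollary \ref{lazyname}, i.e., exactly the telescoping of the consecutive congruences $\lambda_{K_{j+1}} \equiv \lambda_{K_j} \pmod{\varphi(p^{j+1})}$ that you relegate to a cross-check at the end; part (2) is obtained by summing the differences $(\lambda_{K_{n-i}}-\lambda_{K_{n-i-1}})/\varphi(p^{n-i})$, asserting that this sum is nonnegative, and converting it via the formula of Theorem \ref{above} into $n\lambda_{K_0}+\chi(G_n, A_{K_n})$. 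You instead re-open the proof of Theorem \ref{above} and work directly with the nonnegative integers $r_0, \ldots, r_n$ supplied by Lemma \ref{work}, reading both claims off the identities $\lambda_{K_i}=\sum_{t=0}^i r_t\varphi(p^t)$ and $\chi(G_n, A_{K_n})=-nr_0+\sum_{t=1}^n r_t$. What your route buys: part (2) becomes a one-line sign observation, and the monotonicity $\lambda_{K_{j+1}}\geq \lambda_{K_j}$ --- which the paper's part (2) uses implicitly when it declares its sum nonnegative --- is made explicit through $r_t \geq 0$. What the paper's route buys: the corollary follows from the \emph{statements} of Theorem \ref{above} and Corollary \ref{lazyname} alone, with no appeal to data internal to their proofs. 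Note also that the paper's remark immediately following the corollary gives a ``shorter, more direct proof'' of part (1) via Lemma \ref{Rei}, which is close in spirit to your approach.
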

\begin{proof}
For part (1), we only need to note that Eq. (\ref{use}) in the proof of Corollary \ref{lazyname} implies 
\begin{align*}
\lambda_{K_n}\equiv \lambda_{K_{n-1}} \equiv \ldots \equiv \lambda_{K_i} \pmod{\varphi(p^{i+1})}
\end{align*}
 for all $i = 1, \ldots, n$.
To prove part (2), we note that
\begin{align*}
0 &\leq \sum_{i=0}^{n-1} \frac{\lambda_{K_{n-i}} - \lambda_{K_{n-i-1}}}{\varphi(p^{n-i})} = \frac{1}{\varphi(p^n)}\sum_{i=0}^{n-1} p^i(\lambda_{K_{n-i}} - \lambda_{K_{n-i-1}}) \\
&= \frac{1}{\varphi(p^n)}\left(\sum_{i=0}^{n-1} \varphi(p^i)\lambda_{K_{n-i}} - p^{n-1}\lambda_{K_{0}} \right) \\
&=  \frac{1}{\varphi(p^n)}\left(p^{n-1}(np-n+1)\lambda_{K_0} + \varphi(p^n)\chi(G_n, A_{K_n}) - p^{n-1}\lambda_{K_{0}} \right) \\
&= n\lambda_{K_0}+ \chi(G_n, A_{K_n}) \\
\end{align*}
again by Theorem \ref{above}.
\end{proof}
\begin{rem}
We can give a shorter, more direct proof of part (1) in Corollary \ref{gencong} above. Namely, we apply Lemma \ref{Rei} directly to get a short exact sequence
\begin{align*}
0 \rightarrow (A_{K_n}^{\ast})^{N_{n-1}} \hookrightarrow A_{K_n}^{\ast} \rightarrow \Z_p[\zeta_{p^n}]^{\oplus r} \rightarrow 0,
\end{align*}
so
\begin{align*}
\lambda_{K_n}  = \rank_{\Z_p}(A_{K_n}^{\ast}) = \rank_{\Z_p}((A_{K_n}^{\ast})^{N_{n-1}}) + \rank_{\Z_p}(\Z_p[\zeta_{p^n}]^{\oplus r}) = \lambda_{K_{n-1}} + r \varphi(p^n)
\end{align*}
as needed.
\end{rem}
Now we relate the $n$ Euler characteristics associated to subgroups (rather than quotients or subquotients)
\begin{align*}\chi(N_0,A_{K_n}), \chi(N_1, A_{K_n}), \ldots, \mbox{ and } \chi(N_{n-1}, A_{K_n})\end{align*}
to the two lambda invariants $\lambda_{K_n}$ and $\lambda_{K_0}$. The result is of a different nature since it involves non-integer coefficients.
\begin{thm}\label{subgroup}
Let $p$ be prime and $K_0 \subseteq K_1 \subseteq \ldots \subseteq K_n$ be a tower of $\Z_p$-fields such that for all $i$ the extension $K_i/K_0$ is cyclic of degree $p^i$. Suppose $\mu_{K_0} = 0$. Then $\mu_{K_1} = \cdots  = \mu_{K_n} = 0$ and
\begin{align*} \frac{\lambda_{K_n} - p^n\lambda_{K_0}}{p-1} = \frac{p^n\chi(N_0, A_{K_n})}{np-n+1} + \sum_{i=1}^{n-1} \frac{p^i(p-1)\chi(N_{n-i}, A_{K_n})}{(ip-i+p)(ip-i+1)}  \end{align*}
where $N_i = \Gal(K_n/K_i)$.
\end{thm}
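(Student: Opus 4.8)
The plan is first to note that the vanishing $\mu_{K_1}=\cdots=\mu_{K_n}=0$ is immediate from $\mu_{K_0}=0$ by applying Iwasawa's Theorem~\ref{tm} to each degree-$p$ step and inducting, exactly as in the proof of Theorem~\ref{above}. For the displayed identity I would make every term explicit via Lemma~\ref{work}. Write $G_n = \langle g\rangle \cong \Z/(p^n)$ with $N_i = \langle g^{p^i}\rangle = \Gal(K_n/K_i)$, and apply Lemma~\ref{work} to the $\Z_pG_n$-module $M = A_{K_n}^{\ast}$, which is free of rank $\lambda_{K_n}$ over $\Z_p$. This yields nonnegative integers $r_0,\dots,r_n$ with
\begin{align*}
\lambda_{K_i} = \rank_{\Z_p}(M^{N_i}) = \sum_{t=0}^{i} r_t\varphi(p^t), \qquad \chi(N_i, M) = (n-i)\sum_{t=0}^{i} r_t\varphi(p^t) - p^i\sum_{t=i+1}^{n} r_t,
\end{align*}
the first using that $\lambda_{K_i}=\rank_{\Z_p}((A_{K_n}^{\ast})^{N_i})$ as in the proof of Theorem~\ref{above}, and property~(3) of the Euler characteristic converting the second into $\chi(N_i,A_{K_n}) = -\chi(N_i,M)$. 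In particular $\lambda_{K_0}=r_0$, $\lambda_{K_n}=\sum_{t=0}^n r_t\varphi(p^t)$, and $\chi(N_n,A_{K_n})=0$.

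After these substitutions both sides of the asserted equation become $\Q$-linear forms in $r_0,\dots,r_n$, so it suffices to match the coefficient of each $r_t$. On the left the coefficient of $r_t$ is $p^{t-1}$ for $t\ge 1$ and $-\tfrac{p^n-1}{p-1}$ for $t=0$. To treat the right efficiently I would abbreviate $a_i := i(p-1)+1$, so that the two denominators occurring are $ip-i+1 = a_i$ and $ip-i+p = a_{i+1}$, with $a_0=1$, $a_1=p$, and $a_{i+1}-a_i = p-1$; writing $\beta_i$ for the coefficient of $\chi(N_i,A_{K_n})$ on the right (so $\beta_0 = p^n/a_n$ and $\beta_j = p^{n-j}(p-1)/(a_{n-j+1}a_{n-j})$ for $1\le j\le n-1$), the decisive simplification is the partial-fraction identity
\begin{align*}
\frac{p-1}{a_{i+1}a_i} = \frac{1}{a_i} - \frac{1}{a_{i+1}},
\end{align*}
which recasts the $\beta_j$ as telescoping differences.

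The coefficient of $r_t$ on the right, for $t\ge 1$, then reads
\begin{align*}
-p^{t-1}(p-1)\sum_{i=t}^{n-1}\beta_i(n-i) + \sum_{i=0}^{t-1}\beta_i p^i.
\end{align*}
The tail sum $\sum_{i=0}^{t-1}\beta_i p^i$ telescopes at once to $p^n/a_{n-t+1}$. The head sum $\sum_{i=t}^{n-1}\beta_i(n-i)$, after the substitution $k=n-i$ and an Abel summation resting on the identity $kp^k-(k-1)p^{k-1} = p^{k-1}a_k$, collapses to $\tfrac{p^m-1}{p-1}-\tfrac{mp^m}{a_{m+1}}$ with $m=n-t$. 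Feeding both back in and using $a_{n-t+1} = (p-1)(n-t)+p$ to recognize $p^{n-1}(p-1)m + p^n = p^{n-1}a_{n-t+1}$, the two $p^{n-1}$ contributions cancel and one is left with precisely $p^{t-1}$; the degenerate case $t=0$, where $\varphi(p^0)=1$ forces a separate head computation, reproduces $-\tfrac{p^n-1}{p-1}$. I expect the only real obstacle to be this telescoping bookkeeping — keeping the reindexings straight and confirming that the Abel-summed head and the telescoped tail recombine into the single monomial $p^{t-1}$ — since all the structural content is already supplied by Lemma~\ref{work} and property~(3).
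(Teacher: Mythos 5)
Your proposal is correct, but it takes a genuinely different route from the paper's proof, even though both ultimately rest on the same telescoping trick. The paper never reintroduces the integers $r_t$: it applies Theorem \ref{above} to each sub-extension $K_n/K_{n-i}$ (whose Galois group is the subgroup $N_{n-i}$), obtaining
\begin{align*}
\sum_{j=0}^{i-1}\varphi(p^j)\lambda_{K_{n-j}} = p^{i-1}(ip-i+1)\lambda_{K_{n-i}} + \varphi(p^i)\chi(N_{n-i},A_{K_n}),
\end{align*}
multiplies this by $p\left((ip-i+p)(ip-i+1)\right)^{-1}$, sums over $i=1,\ldots,n-1$, telescopes via the same partial-fraction identity you isolate, cancels like terms, and closes by invoking the $i=n$ instance. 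You instead apply Lemma \ref{work} once to $A_{K_n}^{\ast}$ over the full group $G_n$, express every $\lambda_{K_i}$ and every $\chi(N_i,A_{K_n})$ as a linear form in $r_0,\ldots,r_n$ (correctly importing from the proof of Theorem \ref{above} the snake-lemma identification $\lambda_{K_i}=\rank_{\Z_p}((A_{K_n}^{\ast})^{N_i})$ and the duality property $\chi(N_i,A_{K_n})=-\chi(N_i,A_{K_n}^{\ast})$), and then verify the asserted identity coefficient-by-coefficient in the $r_t$ basis. I checked your bookkeeping: the tail sum does telescope to $p^n/a_{n-t+1}$, the Abel summation built on $kp^k-(k-1)p^{k-1}=p^{k-1}a_k$ does collapse the head sum to $\frac{p^m-1}{p-1}-\frac{mp^m}{a_{m+1}}$ with $m=n-t$, the recombination yields exactly $p^{t-1}$ for $t\geq 1$, and the separate $t=0$ computation yields $-\frac{p^n-1}{p-1}$, matching the left side. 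What your route buys is a purely mechanical verification — finitely many rational-function identities, each checkable in isolation — together with a conceptual explanation of the odd coefficients: they are precisely the weights dual to the $r_t$ parametrization. What the paper's route buys is brevity and reuse: Theorem \ref{above} serves as a black box, at the cost of the less obvious idea of applying it to the sub-towers above each $K_{n-i}$ rather than to towers over $K_0$.
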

\begin{proof}
We may assume $n\geq 2$ since the $n=0$ case is clear and the $n=1$ case is again a restatement of Iwasawa's formula. For each $i\geq 1$, we apply Theorem \ref{above} to the degree $p^i$ extension $K_n/K_{n-i}$ to find
\begin{align}\label{star}
\sum_{j=0}^{i-1} \varphi(p^j)\lambda_{K_{n-j}} &= p^{i-1}(ip-i+1)\lambda_{K_{n-i}} +\varphi(p^i)\chi(N_{n-i},A_{K_n}).
\end{align}
Now we multiply both sides of Eq. (\ref{star}) by $p((ip-i+p)(ip-i+1))^{-1}$ and sum over $i$, which gives
\begin{align*}
&\sum_{i=1}^{n-1}\frac{p^i\lambda_{K_{n-i}}}{ip-i+p} +\sum_{i=1}^{n-1}\frac{p^i(p-1)\chi(N_{n-i},A_{K_n})}{(ip-i+p)(ip-i+1)} \\
&= \sum_{i=1}^{n-1}\sum_{j=0}^{i-1}  \frac{p\varphi(p^j)\lambda_{K_{n-j}} }{(ip-i+p)(ip-i+1)} \\
&=  \sum_{j=0}^{n-2} \frac{p\varphi(p^j)\lambda_{K_{n-j}}}{p-1} \sum_{i=j+1}^{n-1} \left(\frac{1}{ip-i+1} - \frac{1}{(i+1)p-(i+1)+1} \right) \\
&= \sum_{j=0}^{n-2} \frac{p\varphi(p^j)\lambda_{K_{n-j}}}{p-1}\left(\frac{1}{jp-j+p} - \frac{1}{np-n+1} \right) \\
&= \frac{\lambda_{K_n}(np-n+1-p)}{(p-1)(np-n+1)} + \sum_{j=1}^{n-2} \frac{p^j\lambda_{K_{n-j}}}{jp-j+p} - \frac{1}{np-n+1}\sum_{j=1}^{n-2} p^j\lambda_{K_{n-j}}.
\end{align*}
Now we cancel like terms and again apply Eq. (\ref{star}) (with $i=n$) to conclude
\begin{align*}
&\sum_{i=1}^{n-1}\frac{p^i(p-1)\chi(N_{n-i},A_{K_n})}{(ip-i+p)(ip-i+1)} \\
&=  \frac{\lambda_{K_n}(np-n+1-p)}{(p-1)(np-n+1)} -\frac{p^{n-1}\lambda_{K_1}}{np-n+1} - \frac{1}{np-n+1}\sum_{j=1}^{n-2} p^j\lambda_{K_{n-j}} \\
&=  \frac{\lambda_{K_n}}{p-1} - \frac{p}{(p-1)(np-n+1)}\sum_{j=0}^{n-1} \varphi(p^j)\lambda_{K_{n-j}} \\
&= \frac{\lambda_{K_n}}{p-1}-\frac{p}{(p-1)(np-n+1)}(p^{n-1}(np-n+1)\lambda_{K_0} +\varphi(p^n)\chi(N_0,A_{K_n})) \\
&= \frac{\lambda_{K_n}-p^n\lambda_{K_0}}{p-1}-\frac{p^n\chi(N_0,A_{K_n})}{np-n+1}
\end{align*}
which completes the proof.
\end{proof}

\section{An Alternative Proof of Lemma \ref{work}}

Using a suggestion of Ralph Greenberg, we can use the structure thm for finitely generated $\Lambda$-modules to give a different proof of Lemma \ref{work}. Here $\Lambda = \Z_p[[T]]$ is an Iwasawa algebra.
\begin{thm}\label{lambdastructure}
Let $M$ be a finitely generated $\Lambda$-module. Then there is a $\Lambda$-module homomorphism
\begin{align*}
\theta\colon M \rightarrow \Lambda^r \oplus \bigoplus_{i=1}^s \frac{\Lambda}{(f_i(T)^{m_i})} \oplus \bigoplus_{j=1}^t \frac{\Lambda}{(p^{n_j})}
\end{align*}
such that $\ker(\theta), \coker(\theta)$ are finite and where each $f_i(T) \in \Z_p[T]$ is irreducible with $f_i(T) \equiv \mbox{power of }T \pmod{p}$.
\end{thm}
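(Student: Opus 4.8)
The plan is to prove the standard structure theorem for finitely generated modules over the Iwasawa algebra $\Lambda = \Z_p[[T]]$ by exploiting that $\Lambda$ is a particularly well-behaved ring and by comparing $M$ with an ``elementary'' module locally at each height-one prime. First I would record the ring-theoretic facts: $\Lambda$ is a Noetherian regular local ring of Krull dimension $2$ with maximal ideal $\m = (p,T)$ and residue field $\F_p$, hence it is integrally closed and in fact a UFD. By the Weierstrass preparation theorem every nonzero element factors as $p^{\mu}U(T)P(T)$ with $U$ a unit and $P$ a distinguished polynomial, so the height-one primes of $\Lambda$ are exactly the principal ideals $(p)$ and $(f(T))$ with $f$ an irreducible distinguished polynomial; since a distinguished polynomial is congruent to a power of $T$ modulo $p$, this is where the stated normalization $f_i(T)\equiv$ power of $T \pmod p$ comes from. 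The other fact I would isolate at the outset is that a finitely generated $\Lambda$-module is finite (as a set) if and only if it is supported only at $\m$: such a module has finite length with every composition factor equal to the finite field $\Lambda/\m = \F_p$. This reduces every finiteness claim about $\ker\theta$ and $\coker\theta$ to a statement about supports.

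Next I would split off the free part. Writing $T(M)$ for the $\Lambda$-torsion submodule, the quotient $M/T(M)$ is finitely generated and torsion-free, of rank $r = \dim_{\mathrm{Frac}(\Lambda)}(M \otimes_\Lambda \mathrm{Frac}(\Lambda))$. Because $\Lambda$ is a regular local ring of dimension $2$, every finitely generated reflexive module is free, and a finitely generated torsion-free module embeds in its reflexive hull (its double dual) with cokernel supported in codimension $\geq 2$; thus $M/T(M)$ admits an injection into $\Lambda^{r}$ whose cokernel is supported at $\m$ and hence finite.

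Then I would treat the torsion module $T(M)$ by localizing at height-one primes. For each height-one prime $\p$ the localization $\Lambda_\p$ is a discrete valuation ring (a regular local ring of dimension $1$), so $T(M)_\p$ has finite length over $\Lambda_\p$ and decomposes as a direct sum of cyclic modules $\Lambda_\p/\p^m\Lambda_\p$; moreover only the finitely many primes $\p$ dividing a nonzero annihilator of $T(M)$ occur. Collecting this local data, I would form the elementary module $E = \bigoplus_i \Lambda/(f_i(T)^{m_i}) \oplus \bigoplus_j \Lambda/(p^{n_j})$ whose localization at every height-one prime agrees with that of $T(M)$, and construct a comparison homomorphism $T(M) \to E$. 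By construction this map becomes an isomorphism after localizing at every height-one prime, so both its kernel and cokernel have support contained in $\{\m\}$ and are therefore finite.

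Finally I would assemble the two pieces into a single homomorphism $\theta\colon M \to \Lambda^{r}\oplus E$ with finite kernel and cokernel. The main obstacle is exactly this assembly together with orienting the arrow out of $M$ in the stated direction: the embedding $M/T(M)\hookrightarrow \Lambda^r$ kills $T(M)$, while the map built from the torsion part is defined on $T(M)$, so I must patch them compatibly. I expect to handle this using that pseudo-isomorphism of finitely generated torsion $\Lambda$-modules is a symmetric relation over this (normal, Noetherian) ring, which lets me orient the torsion comparison as a map out of $T(M)$, together with an $\mathrm{Ext}$/lifting argument to extend it over $M$ up to finite error. Once $\theta$ is in hand, its kernel and cokernel are again supported at $\m$, so the finiteness conclusion follows from the composition-length observation of the first step.
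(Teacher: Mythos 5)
The first thing to say is that the paper contains no proof of Theorem \ref{lambdastructure} at all: it is the classical structure theorem for finitely generated $\Lambda$-modules (Serre, Iwasawa; see Washington's \emph{Introduction to Cyclotomic Fields}, Thm.\ 13.12, or Bourbaki, \emph{Alg\`ebre commutative} VII, \S 4), invoked as a black box on Greenberg's suggestion and then used to rederive Lemma \ref{work}. So your proposal cannot be compared with ``the paper's proof''; it must be judged against the standard proofs in the literature. Your route is the Bourbaki/Serre one (localization at height-one primes over a two-dimensional regular local UFD), as opposed to the matrix-reduction proof in Washington, and its pillars are all correct: $\Lambda$ is regular local of dimension $2$ whose height-one primes are $(p)$ and $(f)$ with $f$ distinguished irreducible (Weierstrass preparation, which indeed accounts for the normalization $f_i(T) \equiv T^{\deg f_i} \pmod{p}$); finite $=$ pseudo-null for finitely generated $\Lambda$-modules; $N := M/T(M)$ injects into its reflexive hull $N^{\ast\ast} \cong \Lambda^r$ with cokernel supported at $\m$; and the torsion comparison $\phi\colon T(M) \to E$ comes from inverting $S = \Lambda \setminus (\p_1 \cup \dots \cup \p_k)$, where the $\p_i$ are the height-one primes in the support, so that $S^{-1}\Lambda$ is a semilocal PID, and then clearing denominators via $\Hom_{S^{-1}\Lambda}(S^{-1}T(M), S^{-1}E) = S^{-1}\Hom_{\Lambda}(T(M),E)$. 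Note that this construction already produces the arrow \emph{out of} $T(M)$, so you should not lean on the symmetry of pseudo-isomorphism for torsion modules: that symmetry is itself usually deduced from the structure theorem (it does admit a direct proof by the same localization trick, so the circularity is avoidable, but it is cleaner to sidestep it).

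The one step where your sketch, executed naively, would break is the final assembly, and it deserves to be done precisely. The obstruction to extending $\phi\colon T(M) \to E$ to $\eta \colon M \to E$ is its image in $\mathrm{Ext}^1_{\Lambda}(N,E)$; this group is finite, because $0 \to N \to \Lambda^r \to C \to 0$ with $C$ finite gives $\mathrm{Ext}^1_{\Lambda}(N,E) \hookrightarrow \mathrm{Ext}^2_{\Lambda}(C,E)$, which is finite by d\'evissage to $C = \F_p$. Hence any $g \in \Lambda$ annihilating $\mathrm{Ext}^1_{\Lambda}(N,E)$ makes $g\phi$ extendable to some $\eta\colon M\to E$, and then $\theta = (M \twoheadrightarrow N \hookrightarrow \Lambda^r) \oplus \eta$ works: $\ker\theta = \ker(g\phi)$, and $\coker\theta$ is an extension of $\Lambda^r/N$ by a quotient of $\coker(g\phi)$. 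The trap is the choice of $g$. The natural first choice, $g = p^k$ with $p^k$ the exponent of the finite obstruction group, destroys the argument: multiplication by $p$ on $E$ has infinite kernel and cokernel as soon as $E$ has a summand $\Lambda/(p^{n_j})$, so $p^k\phi$ need no longer be a pseudo-isomorphism. You must instead choose $g$ outside every height-one prime in the support of $E$, which is possible by prime avoidance: the annihilator of the finite module $\mathrm{Ext}^1_{\Lambda}(N,E)$ contains a power $\m^j$, and $\m^j \not\subseteq \p_1 \cup \dots \cup \p_k$. With $g$ so chosen, $g\phi$ is still a pseudo-isomorphism and the finiteness of $\ker\theta$ and $\coker\theta$ follows. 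With that refinement, and with the finiteness of the $\mathrm{Ext}$ group made explicit, your outline closes up into a complete and correct proof.
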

We will see that Lemma \ref{work} follows as an easy corollary of the following lemma.
\begin{lemma}\label{Lambda}
Let $G = \langle g \rangle \cong \Z/(p^n)$ for some prime $p$ and some nonnegative integer $n$. Suppose $M$ is a $\Z_pG$-module which is free of finite rank over $\Z_p$. There is an injective $\Z_pG$-module homomorphism with finite cokernel
\begin{align*}
M \rightarrowtail \bigoplus_{i=0}^n \Z_p[\zeta_{p^i}]^{\oplus r_i}
\end{align*}
for some nonnegative integers $r_0, \ldots, r_n$ where each $\Z_p[\zeta_{p^i}]$ has $\Z_pG$-module structure given by
\begin{align*}\Z_p[\zeta_{p^i}] \cong \frac{\Z_pG}{\Phi_{p^i}(g)\Z_pG}.\end{align*}
\end{lemma}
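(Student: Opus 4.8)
The plan is to regard $M$ as a module over the Iwasawa algebra $\Lambda = \Z_p[[T]]$ and apply the structure theorem, Theorem \ref{lambdastructure}. Writing $G = \langle g \rangle$, I make $\Lambda$ act on any $\Z_pG$-module through the surjection $\Lambda \twoheadrightarrow \Z_pG$ sending $T \mapsto g-1$; concretely, $g$ acts as $1+T$. Since $g^{p^n}=1$ on $M$, the element $\omega := (1+T)^{p^n}-1$ annihilates $M$, and because $M$ is finitely generated over $\Z_p$ it is in particular finitely generated over $\Lambda$. First I would invoke Theorem \ref{lambdastructure} to produce a $\Lambda$-homomorphism $\theta$ from $M$ into $X := \Lambda^r \oplus \bigoplus_{i=1}^s \Lambda/(f_i(T)^{m_i}) \oplus \bigoplus_{j=1}^t \Lambda/(p^{n_j})$ with finite kernel and cokernel.

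Next I would use the $\Z_p$-freeness of $M$ to discard the unwanted summands. Since $\ker(\theta)$ is a finite submodule of the torsion-free module $M$, it must vanish, so $\theta$ is already injective. Because $\omega \neq 0$ annihilates $M$, the module $M$ is $\Lambda$-torsion, and finiteness of $\coker(\theta)$ (which preserves $\Lambda$-rank) then forces the free rank $r=0$. Finally, the $\Z_p$-torsion submodule of $X$ meets the $\Z_p$-free image $\theta(M)$ trivially, hence injects into the finite module $\coker(\theta)$ and is therefore finite; since each $\Lambda/(p^{n_j})$ is infinite and $\Z_p$-torsion, this forces $t=0$. Thus $X \cong \bigoplus_{i=1}^s \Lambda/(f_i(T)^{m_i})$, with each $f_i$ a distinguished irreducible and each such quotient $\Z_p$-free.

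The key step, and the main obstacle, is to identify the surviving cyclic torsion pieces with the $\Z_p[\zeta_{p^i}]$. Here I would exploit the factorization $\omega = (1+T)^{p^n}-1 = \prod_{i=0}^n \Phi_{p^i}(1+T)$ into the distinct irreducible distinguished polynomials $\Phi_{p^i}(1+T)$; this product is squarefree because $\omega$ is separable over $\Q_p$. Choosing $c$ with $p^c\coker(\theta)=0$ gives $p^c X \subseteq \theta(M)$, whence $p^c\omega$ annihilates $X$, and therefore $f_i^{m_i} \mid p^c\omega$ in the unique factorization domain $\Lambda$ for each $i$. Since $f_i \equiv (\text{a power of }T) \pmod p$ is coprime to $p$ and divides the squarefree product $\prod_{i=0}^n \Phi_{p^i}(1+T)$, it must be associate to exactly one $\Phi_{p^k}(1+T)$ with $0 \le k \le n$, and $m_i = 1$.

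It then remains to match the module structure: for each $k$ the Weierstrass division theorem gives $\Lambda/(\Phi_{p^k}(1+T)) \cong \Z_p[T]/(\Phi_{p^k}(1+T)) \cong \Z_p[\zeta_{p^k}]$, under which $g = 1+T$ acts as $\zeta_{p^k}$, precisely the $\Z_pG$-structure $\Z_pG/\Phi_{p^k}(g)\Z_pG$ recorded in the statement. Collecting summands by the value of $k$ yields $X \cong \bigoplus_{k=0}^n \Z_p[\zeta_{p^k}]^{\oplus r_k}$ with each $r_k \ge 0$, and since every $\Z_p[\zeta_{p^k}]$ with $k \le n$ is annihilated by $\omega$, a $\Lambda$-homomorphism into $X$ is the same as a $\Z_pG$-homomorphism. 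This delivers the desired injection $M \rightarrowtail \bigoplus_{k=0}^n \Z_p[\zeta_{p^k}]^{\oplus r_k}$ with finite cokernel.
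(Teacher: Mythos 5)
Your proof is correct and follows essentially the same route as the paper: view $M$ as a $\Lambda$-module via $T \mapsto g-1$, apply Theorem \ref{lambdastructure}, use $\Z_p$-freeness to kill $\ker(\theta)$ and the summands $\Lambda^r$ and $\Lambda/(p^{n_j})$, and identify each $f_i(T)^{m_i}$ with some $\Phi_{p^k}(T+1)$ dividing $(T+1)^{p^n}-1$. In fact your treatment of the last step (passing through $p^c\omega$ annihilating $X$ and using squarefreeness of $\prod_k \Phi_{p^k}(1+T)$) fills in a detail the paper leaves implicit.
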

\begin{proof}
We know
\begin{align*}
\Lambda  \cong \varprojlim_{m \in \N} \Z_p[\Z/(p^m)] \colon T \mapsto (g_m - 1)_{m \in \N}
\end{align*}
with $\Z/(p^m)=\< g_m\>$ written multiplicatively, so $\Z_pG$ is a quotient ring of $\Lambda$. In this way, every $\Z_pG$-module is a $\Lambda$-module with $T$ acting as $g-1$, so Theorem \ref{lambdastructure} implies there is a $\Lambda$-module homomorphism
\begin{align*}
\theta \colon M \rightarrow \Z_p[[T]]^r \oplus \bigoplus_{i=1}^s \frac{\Z_p[[T]]}{(f_i(T)^{m_i})} \oplus \bigoplus_{j=1}^t \frac{\Z_p[[T]]}{(p^{n_j})}
\end{align*}
such that $\ker(\theta), \coker(\theta)$ are finite and where each $f_i(T) \in \Z_p[T]$ is irreducible with $f_i(T) \equiv \mbox{power of }T \pmod{p}$. Immediately, we see that $\ker(\theta) = 0$ and $r=t=0$ since $M$ is free and finitely generated over $\Z_p$. Since $(T+1)^{p^n}-1$ annihilates $M$, each $f_i(T)^{m_i}$ must be a divisor of $(T+1)^{p^n}-1 = \prod_{j=0}^n \Phi_{p^j}(T+1)$, so, in fact, $f_i(T)^{m_i} = \Phi_{p^j}(T+1)$ for some $j$ whence
\begin{align*}
 \frac{\Z_p[[T]]}{(f_i(T)^{m_i})} \cong  \frac{\Z_p[T]}{(\Phi_{p^j}(T+1))} \cong \frac{\Z_pG}{\Phi_{p^j}(g)\Z_pG}
\end{align*}
as $\Z_pG$-modules.
\end{proof}
\begin{rem}\label{regift}
Let $M,G = \<g\> \cong \Z/(p^n)$ be as in Lemma \ref{Lambda}. We can now give another proof of Lemma \ref{work}. Observe that if $C$ is a finite $\Z_pG$-module, then $\chi(N_i,C) = 0$ and $\rank_{\Z_p}(C^{N_i}) = 0$ for all $i \in \{0, \ldots, n\}$ where (as in Lemma \ref{work}) $N_i = \<g^{p^i}\>$. Thus since $\chi$ and $\rank_{\Z_p}$ are additive on short exact sequences, we see that it suffices to note the following computations:
\begin{align*}
\Z_p[\zeta_{p^j}]^{N_i} &= \left\{ \begin{array}{ll} \Z_p[\zeta_{p^j}] & \mbox{if } j \leq i \\ 0 & \mbox{if } j > i \end{array} \right. \\
\chi(N_i, \Z_p[\zeta_{p^j}]) &= \ord_p\left( \frac{|H^2(N_i,  \Z_p[\zeta_{p^j}])|}{|H^1(N_i,  \Z_p[\zeta_{p^j}])|} \right) \\
&= \left\{ \begin{array}{ll}  \ord_p\left|\frac{\Z_p[\zeta_{p^j}]}{p^{n-i}\Z_p[\zeta_{p^j}]} \right| = (n-i)\varphi(p^j) & \mbox{if } j \leq i \\ \ord_p\left|\frac{\Z_p[\zeta_{p^j}]}{(1-\zeta_{p^j}^{p^i})\Z_p[\zeta_{p^j}]} \right|^{-1} = -p^i & \mbox{if } j > i.  \end{array} \right.
\end{align*}
\end{rem}
\begin{rem}
The proof of Lemma \ref{Lambda} and Theorem \ref{above} show more than just formulas for Euler characteristics and lambda invariants. Indeed, they show a statement about representations. The proof is straightforward, so we shall forego it here.
\begin{thm}\label{genrep}
Let $K_0 \subseteq K_1 \subseteq \ldots \subseteq K_n$ be a tower of $\Z_p$-fields with $G_i = \Gal(K_i/K)$ and $N_i = \Gal(K_n/K_i) = \<g^{p^i}\> \cong \Z/(p^i)$ for all $i = 0, \ldots, n$. Assume $\mu_{K}=0$, define
\begin{align*}
V_{K_n} := A_{K_n}^{\ast} \otimes_{\Z_p} \Q_p,
\end{align*}
and let $\pi_{K_n/K_0}$ be the corresponding representation. Then we have an isomorphism of $\Q_p$-representations (with the appropriate interpretation for negative coefficients)
\begin{align*}
\pi_{K_n/K_0} \cong \lambda_K\pi_{G_n} \oplus  \bigoplus_{i=1}^n \left( \chi(G_i, A_{K_i}) - \chi(G_{i-1}, A_{K_{i-1}}) \right)\pi_{\varphi(p^i)}
\end{align*}
where $\pi_{G_n}$ is the regular representation and $\pi_d$ is the unique faithful, irreducible representation of degree $d \in \{\varphi(p), \varphi(p^2), \ldots, \varphi(p^n)\}$. Comparing the degrees of both sides of the isomorphism recovers Corollary \ref{lazyname}.
\end{thm}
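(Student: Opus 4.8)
The plan is to read the isotypic decomposition of $V_{K_n}$ straight off Lemma \ref{Lambda} and then match the multiplicities against the Euler-characteristic data already assembled in the proof of Theorem \ref{above}. I would first record the $\Q_p$-representation theory of $G_n \cong \Z/(p^n)$. Since $\Q_p G_n \cong \prod_{i=0}^n \Q_p[x]/(\Phi_{p^i}(x))$ and each factor $\Q_p[x]/(\Phi_{p^i}(x)) \cong \Q_p(\zeta_{p^i})$ is a field, the distinct irreducible $\Q_p G_n$-modules are exactly $W_i := \Z_p[\zeta_{p^i}] \otimes_{\Z_p} \Q_p$ for $i = 0, \ldots, n$, where $W_0$ is the trivial representation and, for $i \geq 1$, $W_i = \pi_{\varphi(p^i)}$ is the inflation to $G_n$ of the faithful irreducible representation $\Q_p(\zeta_{p^i})$ of the quotient $G_i \cong \Z/(p^i)$ (this is the sense in which $\pi_{\varphi(p^i)}$ is ``faithful''). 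In particular the decomposition $x^{p^n}-1 = \prod_{i=0}^n \Phi_{p^i}(x)$ gives $\pi_{G_n} \cong \bigoplus_{i=0}^n W_i$, which I will use to absorb the regular-representation term at the end.

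Next I would apply Lemma \ref{Lambda} to $M = A_{K_n}^{\ast}$, which is free of rank $\lambda_{K_n}$ over $\Z_p$, to obtain an injection with finite cokernel $A_{K_n}^{\ast} \rightarrowtail \bigoplus_{i=0}^n \Z_p[\zeta_{p^i}]^{\oplus r_i}$ for suitable nonnegative integers $r_0, \ldots, r_n$. Tensoring with $\Q_p$ kills the finite cokernel and turns this into a genuine $\Q_p G_n$-isomorphism $\pi_{K_n/K_0} = V_{K_n} \cong \bigoplus_{i=0}^n r_i W_i$. Thus the entire content of the theorem reduces to identifying the multiplicities $r_i$ in terms of the invariants $\lambda_K$ and $\chi(G_i, A_{K_i})$.

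The main work, and the step I expect to be the real obstacle, is expressing these $r_i$ through the Euler characteristics. Here I would use the identification $A_{K_i}^{\ast} \sim (A_{K_n}^{\ast})^{N_i}$ up to finite kernel and cokernel (the snake-lemma comparison $C_{K_i} \to C_{K_n}^{N_i}$ from the proof of Theorem \ref{above}); the delicate point is that a single family $r_0, \ldots, r_n$ governs every level simultaneously, and one must check this compatibility carefully rather than re-running Lemma \ref{Lambda} independently at each $K_i$. Taking $N_i$-invariants of the decomposition and using $\Z_p[\zeta_{p^t}]^{N_i} = \Z_p[\zeta_{p^t}]$ for $t \leq i$ and $0$ for $t > i$ (Remark \ref{regift}) gives $A_{K_i}^{\ast} \sim \bigoplus_{t=0}^i \Z_p[\zeta_{p^t}]^{\oplus r_t}$ as $\Z_p G_i$-modules. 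Applying the Euler-characteristic computation of Remark \ref{regift} to this module over $G_i$ at the top level, together with the duality $\chi(G_i, A_{K_i}) = -\chi(G_i, A_{K_i}^{\ast})$, yields $\chi(G_i, A_{K_i}) = -i\,r_0 + \sum_{t=1}^i r_t$ (this already reproduces the $i = n$ value from Theorem \ref{above}). Subtracting consecutive levels gives $\chi(G_i, A_{K_i}) - \chi(G_{i-1}, A_{K_{i-1}}) = r_i - r_0$, while $\lambda_K = \lambda_{K_0} = r_0$.

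Finally I would substitute these relations and use $\pi_{G_n} = \bigoplus_{i=0}^n W_i$ to collapse $\lambda_K \pi_{G_n} \oplus \bigoplus_{i=1}^n (r_i - r_0) W_i$ into $\bigoplus_{i=0}^n r_i W_i$, which is precisely the decomposition of $V_{K_n}$ found above; the negative coefficients $r_i - r_0$ are to be read in this virtual sense, since they are compensated by the $r_0$ copies of each $W_i$ contributed by $\pi_{G_n}$. To recover Corollary \ref{lazyname} I would compare degrees: the left side has degree $\lambda_{K_n}$, and an Abel summation on the right, using $\varphi(p^{i+1}) - \varphi(p^i) = (p-1)\varphi(p^i)$ and $\chi(G_0, A_{K_0}) = 0$, converts $\sum_{i=1}^n (\chi(G_i, A_{K_i}) - \chi(G_{i-1}, A_{K_{i-1}}))\varphi(p^i)$ into $\varphi(p^n)\chi(G_n, A_{K_n}) - (p-1)\sum_{i=1}^{n-1}\varphi(p^i)\chi(G_i, A_{K_i})$, giving exactly the formula of Corollary \ref{lazyname}.
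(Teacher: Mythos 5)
Your proposal is correct, and it is essentially the proof the paper has in mind: the paper omits the argument for Theorem \ref{genrep}, saying only that it follows from Lemma \ref{Lambda} and Theorem \ref{above}, and your write-up fills in exactly that outline (decompose $A_{K_n}^{\ast}\otimes_{\Z_p}\Q_p$ via Lemma \ref{Lambda}, identify the multiplicities $r_i$ through $\lambda_{K_0}=r_0$ and $\chi(G_i,A_{K_i})=-ir_0+\sum_{t=1}^i r_t$, then absorb the regular representation). Your handling of the one delicate point --- that a single family $r_0,\ldots,r_n$ works at every level, via $N_i$-invariants of the top-level decomposition and the snake-lemma comparison --- is exactly the compatibility the paper's proof of Theorem \ref{above} supplies, so nothing is missing.
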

\end{rem}

\section{Vanishing Criteria for Iwasawa Lambda Invariants}

In this section we give a couple of generalized vanishing criteria for Iwasawa lambda invariants. The criteria will apply to certain cyclic extensions of $\Z_p$-fields of degree $p^n$ and will generalize the results found in \cite{Fuku} of Fukuda et al. We need a couple of lemmas.
\begin{lemma}\label{useful}
Let $L/K$ be a cyclic $p$-extension of $\Z_p$-fields with $G = \Gal(L/K)$. Suppose $\mu_K=\lambda_K = 0$. Then
\begin{align*}
\ord_p|H^1(G,\OO_L^{\times})| + \ord_p|(I_L^GP_L)/(I_KP_L) | = \chi(G,I_L)
\end{align*}
\end{lemma}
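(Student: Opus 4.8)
The plan is to recast every term as the $p$-adic order of a quotient of fractional-ideal groups, and then to reduce the claimed identity to the vanishing of a single capitulation kernel, which is precisely where the hypothesis $\lambda_K=\mu_K=0$ must be used.

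First I would dispose of the term $\ord_p|H^1(G,\OO_L^\times)|$. Applying $G$-cohomology to the short exact sequence
\begin{align*}
1 \to \OO_L^\times \to L^\times \to P_L \to 1, \qquad \beta \mapsto (\beta),
\end{align*}
and using that $L/K$ is cyclic so $H^1(G,L^\times)=0$ by Hilbert's Theorem 90, the long exact sequence collapses to $H^1(G,\OO_L^\times)\cong P_L^G/P_K$, where $P_K$ denotes the image of $K^\times$, that is, the principal ideals of $\OO_L$ generated by elements of $K^\times$. Hence $\ord_p|H^1(G,\OO_L^\times)| = \ord_p|P_L^G/P_K|$.

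Next I would rewrite the middle term and compare with $\chi(G,I_L)$. Since $I_K\subseteq I_L^G$, the second isomorphism theorem gives $(I_L^GP_L)/(I_KP_L)\cong I_L^G/(I_L^G\cap I_KP_L)$, while Lemma \ref{chi} gives $\chi(G,I_L)=\ord_p|I_L^G/I_K|$. Using the filtration $I_K\subseteq (I_L^G\cap I_KP_L)\subseteq I_L^G$, together with the identification $x\mapsto xI_K$ of $P_L^G/(P_L\cap I_K)$ with $(I_L^G\cap I_KP_L)/I_K$ (surjectivity uses that any $y=\mathfrak a(\beta)\in I_L^G\cap I_KP_L$ with $\mathfrak a\in I_K$ satisfies $(\beta)=y\mathfrak a^{-1}\in P_L\cap I_L^G=P_L^G$, and the kernel is $P_L^G\cap I_K=P_L\cap I_K$), I obtain
\begin{align*}
\chi(G,I_L) = \ord_p|P_L^G/(P_L\cap I_K)| + \ord_p|(I_L^GP_L)/(I_KP_L)|.
\end{align*}
Comparing with the target identity and cancelling the common middle term, the lemma becomes equivalent, via $P_K\subseteq P_L\cap I_K\subseteq P_L^G$, to the single assertion $\ord_p|(P_L\cap I_K)/P_K| = 0$.

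The final and decisive step is to interpret $(P_L\cap I_K)/P_K$. Extension of ideals identifies it with the kernel of the natural map $C_K\to C_L$ on ideal class groups, since a class of $C_K$ lies in this kernel exactly when a representing $K$-ideal becomes principal in $L$; passing to $p$-primary parts, $\ord_p|(P_L\cap I_K)/P_K| = \ord_p|\ker(A_K\to A_L)|$. Thus the whole lemma reduces to the injectivity of the extension map $j\colon A_K\to A_L$, and I expect this to be the main obstacle. The hypothesis enters here: $\lambda_K=\mu_K=0$ makes $A_K$ finite by Iwasawa's structure theorem and forces $\mu_L=0$ by Theorem \ref{tm}. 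The functorial relation $N_{L/K}\circ j=[L:K]$ already shows that $\ker(A_K\to A_L)$ is annihilated by $p^n$, but triviality is genuinely deeper and is the crux of the argument. My plan would be to deduce it from the injectivity of the class-group extension map for $\Z_p$-fields with $\mu=0$, which follows from the methods of \cite{Iwas}: when $\lambda=\mu=0$ the class groups of the finite layers stabilize and map isomorphically onto $A_K$, and combining this stabilization with norm compatibility across the $\Z_p$-tower eliminates any capitulating class. Granting this injectivity one gets $\ord_p|(P_L\cap I_K)/P_K|=0$, and the identity follows; everything else is formal manipulation of the ideal-theoretic exact sequences together with Lemma \ref{chi}.
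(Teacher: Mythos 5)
Your formal reductions are correct, and they are in substance the same as the paper's: your filtration $I_K \subseteq (I_L^G \cap I_KP_L) \subseteq I_L^G$ is literally the paper's filtration $I_K \subseteq I_KP_L^G \subseteq I_L^G$ (one checks $I_KP_L^G = I_L^G \cap I_KP_L$ by the same computation you use for surjectivity), the Hilbert 90 identification $H^1(G,\OO_L^\times) \cong P_L^G/P_K$ is the same, and both arguments reduce the lemma to showing that the capitulation kernel $(P_L\cap I_K)/P_K = \ker(C_K \to C_L)$ has trivial $p$-part.

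The problem is that you stop exactly there: your proof of the crux is conditional (``Granting this injectivity \dots''), and the plan you sketch for it --- stabilization of the class groups of the finite layers plus norm compatibility across the $\Z_p$-tower --- is both unsubstantiated and misdirected. No general injectivity statement for $A_K \to A_L$ can be the route, because capitulation kernels are typically nontrivial whenever $A_K \neq 0$. The missing observation is much simpler and is the only place the hypothesis is needed: by Iwasawa's structure theorem (Theorem 1 of the paper), $A_K \cong (\Q_p/\Z_p)^{\lambda_K} \oplus M$ with $M = 0$ precisely when $\mu_K = 0$; hence $\lambda_K = \mu_K = 0$ forces $A_K = 0$ outright. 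Since $(P_L\cap I_K)/P_K$ embeds into $H^1(G,\OO_L^\times)$, which is annihilated by $|G| = p^n$, it is a $p$-subgroup of $C_K$ and therefore lies inside $A_K = 0$; the whole kernel vanishes, not merely its $p$-part. This one-line application of the structure theorem is exactly what the paper does, and it is the step your proposal treats as ``genuinely deeper'' while leaving it unproved.
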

\begin{proof}
There is a short exact sequence of $\Z_pG$-modules
\begin{align*}
(I_KP_L^G)/I_K \rightarrowtail I_L^G/I_K \twoheadrightarrow I_L^G/(I_KP_L^G).
\end{align*}
Also,  
\begin{align*}
(I_K \cap P_L^G)/P_K \subseteq P_L^G/P_K \cong H^1(G,\OO_L^{\times})
\end{align*}
is a $p$-group, so
\begin{align*}
(I_K \cap P_L^G)/P_K \subseteq A_K \cong 0
\end{align*}
by our $\mu_K = \lambda_K=0$ assumption. Thus $I_K \cap P_L^G = P_K$, so
\begin{align*}
\frac{I_KP_L^G}{I_K} &\cong \frac{P_L^G}{I_K \cap P_L^G} = \frac{P_L^G}{P_K} \cong H^1(G,\OO_L^{\times})
\end{align*}
and
\begin{align*}
\frac{I_L^G}{I_KP_L^G} &= \frac{I_L^G}{I_L^G \cap (I_KP_L)} \cong \frac{I_L^GP_L}{I_KP_L}.
\end{align*}
This completes the proof since
\begin{align*}
\ord_p|I_L^G/I_K| = \chi(G,I_L)
\end{align*}
by Lemma \ref{chi}.
\end{proof}
Now we can state and prove the first vanishing criterion.
\begin{thm}\label{vanishing1}
Let $L/K$ be a cyclic $p$-extension of $\Z_p$-fields which is unramified at every infinite place with $G = \Gal(L/K)$. Suppose $\mu_K = 0$. Then $\lambda_L=0$ if and only if the following three conditions hold:
\begin{enumerate}[(i)]
\item $\lambda_K=0$
\item $\ord_p|H^2(G,\OO_L^{\times})| = 0$
\item $\ord_p|(I_L^GP_L)/(I_KP_L)|=0$
\end{enumerate}
\end{thm}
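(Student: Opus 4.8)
The plan is to funnel both implications through the single Euler characteristic $\chi(G,A_L)$, where $G=\Gal(L/K)\cong\Z/(p^n)$, by proving two self-contained equivalences and then combining them. Write $K=K_0\subseteq K_1\subseteq\cdots\subseteq K_n=L$ for the unique tower of intermediate fields with $[K_i:K_0]=p^i$ (so $K_i$ is the fixed field of the subgroup of order $p^{n-i}$). Since $\mu_K=0$, Theorem \ref{above} gives $\mu_{K_i}=0$ for all $i$, so $A_L^{\ast}=A_{K_n}^{\ast}$ is free of finite rank $\lambda_L$ over $\Z_p$ and Lemma \ref{work} applies to it.

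First I would establish the purely algebraic equivalence
\begin{align*}
\lambda_L=0 \iff \bigl(\lambda_K=0 \text{ and } \chi(G,A_L)=0\bigr).
\end{align*}
Applying Lemma \ref{work} to $M=A_L^{\ast}$ produces nonnegative integers $r_0,\dots,r_n$ with $\lambda_{K_i}=\sum_{t=0}^{i}r_t\varphi(p^t)$; in particular $\lambda_K=r_0$, while property (3) of $\chi$ and the $i=0$ case of the lemma give $\chi(G,A_L)=-\chi(N_0,A_L^{\ast})=-nr_0+\sum_{t=1}^{n}r_t$ with $N_0=G$. Because every $r_t\ge 0$ and $\varphi(p^t)\ge 1$, the identity $\lambda_L=\sum_{t=0}^{n}r_t\varphi(p^t)$ shows $\lambda_L=0$ exactly when all $r_t$ vanish; and all $r_t$ vanish precisely when $r_0=\lambda_K=0$ together with $\chi(G,A_L)=0$, the point being that once $r_0=0$ the expression for $\chi(G,A_L)$ collapses to the nonnegative sum $\sum_{t\ge 1}r_t$. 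This is the substitute, valid for arbitrary $p^n$, for the proportionality that Iwasawa's formula supplies only when $n=1$.

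Next I would prove that, under the hypotheses $\mu_K=\lambda_K=0$,
\begin{align*}
\chi(G,A_L)=\ord_p|H^2(G,\OO_L^{\times})|+\ord_p|(I_L^{G}P_L)/(I_KP_L)|,
\end{align*}
with both summands nonnegative. Here I combine three inputs: Lemma \ref{chi}, which (using that $L/K$ is unramified at the infinite places) gives $\chi(G,A_L)=\chi(G,\OO_L^{\times})+\chi(G,I_L)$; the definition of the Euler characteristic, giving $\chi(G,\OO_L^{\times})=\ord_p|H^2(G,\OO_L^{\times})|-\ord_p|H^1(G,\OO_L^{\times})|$; and Lemma \ref{useful}, which (exactly because $\mu_K=\lambda_K=0$) gives $\chi(G,I_L)=\ord_p|H^1(G,\OO_L^{\times})|+\ord_p|(I_L^{G}P_L)/(I_KP_L)|$. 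Adding these, the two copies of $\ord_p|H^1(G,\OO_L^{\times})|$ cancel and the displayed identity drops out. Since each summand is the $p$-adic order of the cardinality of a finite group, both are $\ge 0$, so $\chi(G,A_L)=0$ holds if and only if conditions (ii) and (iii) both hold.

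Finally I would assemble the theorem. For the forward direction, $\lambda_L=0$ forces $\lambda_K=0$ (condition (i)) and $\chi(G,A_L)=0$ by the first equivalence; the second identity, now applicable since $\mu_K=\lambda_K=0$, then splits $\chi(G,A_L)=0$ into (ii) and (iii). Conversely, given (i)--(iii), condition (i) lets me invoke the second identity, (ii) and (iii) yield $\chi(G,A_L)=0$, and the first equivalence returns $\lambda_L=0$. The main obstacle is the first step: for $n>1$ the vanishing of $\lambda_L$ is not detected by $\chi(G,A_L)$ alone (the formula in Corollary \ref{lazyname} mixes in the intermediate $\chi(G_i,A_{K_i})$), so one genuinely needs the sign information $r_t\ge 0$ from Lemma \ref{work} to recover the clean criterion; everything else is bookkeeping with the Herbrand-quotient formalism, granted that the relevant cohomology groups are finite so that the Euler characteristics are defined.
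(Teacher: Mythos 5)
Your proposal is correct and follows essentially the same route as the paper: both reduce the theorem to the equivalence $\lambda_L=0 \Leftrightarrow \chi(G,A_L)=0$ given condition (i), and then split $\chi(G,A_L)$ into the two nonnegative terms $\ord_p|H^2(G,\OO_L^{\times})|$ and $\ord_p|(I_L^{G}P_L)/(I_KP_L)|$ by combining Lemma \ref{chi} with Lemma \ref{useful} exactly as you do. The only difference is one of bookkeeping: where you extract the first equivalence directly from the nonnegative integers $r_t$ of Lemma \ref{work} applied to $A_L^{\ast}$ (together with the rank identifications from the proof of Theorem \ref{above}), the paper cites part (2) of Corollary \ref{gencong} and Corollary \ref{lazyname}, which package the same nonnegativity information.
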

\begin{proof}
Condition (1) is obviously necessary for $\lambda_L=0$, so we will assume $\lambda_K=0$ and must now show that $\lambda_L=0 \Leftrightarrow$ conditions (2) and (3) hold. Consider the tower
\begin{align*}
K=K_0 \subseteq K_1 \subseteq \ldots \subseteq K_n = L
\end{align*}
of $\Z_p$-fields where $G_i = \Gal(K_i/K) \cong \Z/(p^i)$ for all $i = 0, \ldots, n$. Our $\lambda_K = 0$ assumption ensures that $\chi(G_i, A_{K_i}) \geq 0$ for all $i = 1, \ldots, n$ by part (2) of Corollary \ref{gencong}. Thus if $\chi(G,A_L)=0$, then Corollary \ref{lazyname} implies
\begin{align*} 0 \leq (p-1) \sum_{i=1}^{n-1} \varphi(p^i)\chi(G_i, A_{K_i}) = \varphi(p^n)\chi(G, A_L) - \lambda_L = -\lambda_L \leq 0, \end{align*}
so here $\lambda_L= 0$. Conversely, if $\lambda_L=0$, then $\lambda_{K_i}=0$ for all $i=0, \ldots, n$, so $\chi(G,A_L)=0$ by Theorem \ref{above}. Therefore $\lambda_L= 0 \Leftrightarrow \chi(G,A_L) = 0$, but Lemma \ref{useful} and Lemma \ref{chi} imply
\begin{align*}
\chi(G, A_L) &= \ord_p|H^2(G, \OO_L^{\times})| - \ord_p|H^1(G,\OO_L^{\times})|+\chi(G,I_L) \\
&=  \ord_p|H^2(G, \OO_L^{\times})| +  \ord_p|(I_L^GP_L)/(I_KP_L)| \geq 0,
\end{align*}
so $\lambda_L = 0 \Leftrightarrow \ord_p|H^2(G, \OO_L^{\times})| =  \ord_p|(I_L^{G}P_L)/(I_KP_L)| =0$.
\end{proof}
To derive our next lemma and establish the second vanishing criterion, we state the following result. A proof can be found, for example, in \cite{gree}.
\begin{thm}\label{Green}
Let ${l}/k$ be a Galois extension of number fields with $G = \Gal({l}/k)$. Then there is an exact sequence of abelian groups
\begin{align*}0 &\rightarrow \ker(J_{{l}/k}) \rightarrow H^1(G, \OO_{{l}}^{\times}) \rightarrow \bigoplus_{v} \frac{\Z}{(e(w/v))} \rightarrow C_{{l}}^{[G]}/J_{{l}/k}(C_k) \rightarrow 0 \end{align*}
where $C_{{l}}^{[G]}$ is the subgroup of $C_{{l}}^G$ generated by classes of $G$-fixed ideals, the direct sum ranges over all finite places $v$ of $k$ having ramification index $e(w/v)$ with $w$ a place of ${l}$ lying over $v$, and $J_{{l}/k}\colon C_k \rightarrow C_{{l}}$ is the natural map sending the class $[I]$ of an ideal $I$ to the class $[\OO_{{l}}I]$. Further, if $G$ is cyclic and ${l}/k$ is unramified at every infinite place, then
\begin{align*}
\frac{|H^2(G,\OO_{{l}}^{\times})|}{|H^1(G,\OO_{{l}}^{\times})|} = \frac{1}{[{l}:k]}.
\end{align*}
\end{thm}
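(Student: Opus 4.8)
The plan is to obtain the four-term exact sequence from a single application of the snake lemma, comparing the ``principal--ideal--class'' sequence downstairs with its $G$-fixed analogue upstairs, and to treat the final Herbrand-quotient identity separately by the classical unit-lattice computation.

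First I would record the two short exact sequences of abelian groups
\begin{align*}
0 \to P_k \to I_k \to C_k \to 0, \qquad 0 \to P_l^G \to I_l^G \to C_l^{[G]} \to 0,
\end{align*}
the second being valid because $I_l^G \cap P_l = P_l^G$, so that the image of $I_l^G$ in $C_l$ — which is exactly $C_l^{[G]}$ by definition — is isomorphic to $I_l^G/P_l^G$. I would connect these rows by the extension-of-ideals maps $P_k \to P_l^G$, $I_k \to I_l^G$, and $J_{l/k}\colon C_k \to C_l^{[G]}$ (the image of $J_{l/k}$ lands in $C_l^{[G]}$ since extensions of $k$-ideals are $G$-fixed), obtaining a commutative diagram with exact rows. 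Because extension of ideals $I_k \to I_l$ is injective, the two left vertical maps have trivial kernel, so the snake-lemma sequence collapses to
\begin{align*}
0 \to \ker(J_{l/k}) \to \coker(P_k \to P_l^G) \to \coker(I_k \to I_l^G) \to C_l^{[G]}/J_{l/k}(C_k) \to 0.
\end{align*}

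The heart of the argument is to identify the two middle cokernels. For the first I would apply $G$-cohomology to $1 \to \OO_l^{\times} \to l^{\times} \to P_l \to 1$; Hilbert's Theorem 90 gives $H^1(G, l^{\times})=0$, so the sequence ends in $k^{\times} \to P_l^G \to H^1(G,\OO_l^{\times}) \to 0$, and since the image of $k^{\times}$ in $P_l^G$ is precisely the extension of $P_k$, this yields $\coker(P_k \to P_l^G) \cong H^1(G,\OO_l^{\times})$. For the second cokernel I would use that $G$ acts transitively on the primes of $l$ above a fixed finite place $v$ of $k$: the group $I_l^G$ is free on the radicals $\prod_{w \mid v} w$, while the extension of $v$ equals $\bigl(\prod_{w\mid v} w\bigr)^{e(w/v)}$, so $\coker(I_k \to I_l^G) \cong \bigoplus_{v} \Z/(e(w/v))$. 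Substituting these identifications into the collapsed snake sequence produces the asserted exact sequence. For the final identity I would pass to Herbrand quotients, writing $h(\,\cdot\,)$ for $|H^2|/|H^1|$ over the cyclic group $G$. Since $\mu_l$ is finite we have $h(\mu_l)=1$, so $h(\OO_l^{\times})$ equals the Herbrand quotient of the unit lattice $U := \OO_l^{\times}/\mu_l$; by Dirichlet's unit theorem the logarithmic embedding identifies $U \otimes \Q$ with the augmentation submodule $Y_0 \otimes \Q$ of the permutation module $Y = \bigoplus_{w\mid\infty}\Z$, and since the Herbrand quotient depends only on the rational representation, $h(U) = h(Y_0)$. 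From $0 \to Y_0 \to Y \to \Z \to 0$ I get $h(Y_0) = h(Y)/h(\Z)$, where $h(\Z) = |G| = [l:k]$ and, by Shapiro's lemma, $h(Y) = \prod_{v\mid\infty} |G_w|$ with $G_w$ the decomposition group at a place $w\mid v$; the hypothesis of no ramification at the infinite places forces every $|G_w| = 1$, whence $h(Y)=1$ and $h(\OO_l^{\times}) = 1/[l:k]$.

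The main obstacle I anticipate is bookkeeping rather than depth: one must check that the maps in the diagram are mutually compatible so that the snake lemma applies verbatim, and in particular verify carefully that the image of $k^{\times}$ inside $P_l^G$ coincides with the extension of $P_k$, so that the Hilbert 90 cokernel really is $H^1(G,\OO_l^{\times})$. In the Herbrand part, the one nontrivial input is the invariance of the Herbrand quotient under $-\otimes\Q$, which is exactly what lets me replace the opaque unit lattice by the explicit permutation lattice $Y_0$.
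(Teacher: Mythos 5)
Your proposal is correct: the snake-lemma comparison of $0 \to P_k \to I_k \to C_k \to 0$ with $0 \to P_l^G \to I_l^G \to C_l^{[G]} \to 0$, the Hilbert 90 identification $\coker(P_k \to P_l^G) \cong H^1(G,\OO_l^{\times})$, the computation $\coker(I_k \to I_l^G) \cong \bigoplus_v \Z/(e(w/v))$ from orbit sums, and the unit-lattice Herbrand quotient argument all go through as described (the only glossed point is that the logarithmic embedding gives an $\R[G]$-isomorphism $U\otimes\R \cong Y_0\otimes\R$, and one needs the standard fact that $\Q[G]$-representations which become isomorphic over $\R$ are already isomorphic over $\Q$ before invoking the rational invariance of the Herbrand quotient). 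Note that the paper itself offers no proof of this theorem --- it defers entirely to Greenberg's notes --- and your argument is essentially the standard one found in that reference, so there is no divergence of approach to report.
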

\begin{lemma}\label{H2=0}
Let $L/K$ be a cyclic $p$-extension of $\Z_p$-fields which is unramified at every infinite place. Suppose $K = k\Q_{\infty}$ is the cyclotomic $\Z_p$-extension of a number field $k$ such that $p$ does not divide the class number of $k$ and $k$ has only one prime lying above $p$. Then
\begin{align*}\ord_p|H^2(G, \OO_L^{\times})| =0.\end{align*}
where $G=\Gal(L/K)$.
\end{lemma}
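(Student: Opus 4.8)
Since $H^2(G,\OO_L^\times)$ is annihilated by $|G|=p^n$ it is a finite abelian $p$-group, so it suffices to prove $H^2(G,\OO_L^\times)=0$, i.e. $\OO_K^\times=N_{L/K}\OO_L^\times$. I would begin with the reduction afforded by the hypotheses on $k$. As $p\nmid h_k$ we have $A_k=0$, and since $k$ has a single prime above $p$ this prime is totally ramified in $K/k$; the standard control theorem for $\Z_p$-extensions containing exactly one totally ramified prime then gives $A_{k_m}=0$ at every layer, so $A_K=0$ and $\mu_K=\lambda_K=0$. Feeding $\lambda_K=0$ into the rank computations behind Lemma \ref{work} and Theorem \ref{above} (applied to the dual $A_L^\ast$) shows $(A_L^\ast)^G=0$, whence $(\sigma-1)A_L=A_L$ for a generator $\sigma$ of $G$ (so $A_L$ is divisible) and $A_L^G$ is finite; in particular $\ker(J_{L/K})$ and $J_{L/K}(C_K)$ have trivial $p$-part.

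The heart of the argument is to show that every global unit is a global norm, $\OO_K^\times\subseteq N_{L/K}L^\times$, which I would establish layer by layer and then pass to the limit. Fix $u\in\OO_{k_m}^\times$ and view it in $k_{m'}$ for $m'\geq m$. At the infinite places (unramified by hypothesis) and at the finite places where $l_{m'}/k_{m'}$ is unramified, $u$ is automatically a local norm. At the finitely many finite places $v\nmid p$ that ramify the ramification is tame, so $u$ is a local norm at a prime $w\mid v$ exactly when its residue $\overline u$ is an $e(w/v)$-th power in $\kappa_w^\times$; since $e(w/v)$ is a power of $p$ and $\overline u$ has fixed order while $\ord_p|\kappa_w^\times|\to\infty$ as one climbs the cyclotomic tower, this holds for $m'\gg m$. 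Thus for $m'$ large enough $u$ is a local norm at every place except possibly the unique prime $\mathfrak P$ above $p$; but the sum of all local invariants of the class of $u$ in $H^2(G,l_{m'}^\times)$ is zero, and with a single remaining slot this forces the invariant at $\mathfrak P$ to vanish too. By the Hasse norm theorem for the cyclic extension $l_{m'}/k_{m'}$, $u\in N_{l_{m'}/k_{m'}}l_{m'}^\times\subseteq N_{L/K}L^\times$.

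Granting $\OO_K^\times\subseteq N_{L/K}L^\times$, the map $H^2(G,\OO_L^\times)\to H^2(G,L^\times)$ is zero. The cohomology sequence of $1\to\OO_L^\times\to L^\times\to P_L\to1$, together with Hilbert 90 ($H^1(G,L^\times)=0$), then identifies $H^2(G,\OO_L^\times)$ with $H^1(G,P_L)$; and the sequence of $1\to P_L\to I_L\to C_L\to1$, using that $I_L$ is a permutation module so $H^1(G,I_L)=0$, gives $H^1(G,P_L)\cong C_L^G/C_L^{[G]}$, whose $p$-part is $A_L^G/A_L^{[G]}$ (with $A_L^{[G]}$ the subgroup of $A_L$ generated by classes of $G$-fixed ideals). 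Hence everything reduces to the assertion that every ambiguous $p$-class of $L$ is represented by a $G$-fixed ideal, $A_L^G=A_L^{[G]}$.

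This last equality is the step I expect to be the main obstacle; it genuinely uses the $\Z_p$-field, since it can fail at an individual finite layer where the residue fields are not $p$-divisible. Concretely, writing $u=N_{l_{m'}/k_{m'}}(x)$ the principal ideal $(x)$ has the form $\mathfrak c^{\,\sigma-1}$ with $[\mathfrak c]\in A_L^G$, and $u$ is a norm of a unit precisely when $\mathfrak c$ becomes principal higher in the cyclotomic tower; equivalently the image of $[\mathfrak c]$ in the finite group $A_L^G/A_L^{[G]}$ must vanish. I would prove this by controlling these obstruction classes through the tower using that $A_L$ is divisible and that there is a single prime $\mathfrak P$ above $p$: the obstruction is governed by the semilocal units at $\mathfrak P$, and one shows via $S$-unit cohomology (with $S=\{\mathfrak P\}$) together with the reciprocity computation above that the obstruction classes die in the limit. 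Making this final tower and capitulation argument precise is the crux of the proof.
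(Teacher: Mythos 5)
Your proposal has a genuine gap, and it is exactly the step you yourself flag as ``the crux'': the assertion that every ambiguous $p$-class of $L$ is the class of a $G$-fixed ideal, i.e.\ $A_L^G=A_L^{[G]}$, equivalently $H^1(G,P_L)=0$, is never proved. The parts you do carry out are essentially sound: the hypotheses give $A_{k_m}=0$ at every layer; your climbing-the-tower argument (tame local norm criterion at places $v\nmid p$, growth of the $p$-part of the residue fields, the single prime above $p$, the sum-of-invariants trick, and the Hasse norm theorem) correctly yields $\OO_K^\times\subseteq N_{L/K}(L^\times)$; and the two cohomology sequences, with Hilbert 90 and $H^1(G,I_L)=0$, correctly identify $H^2(G,\OO_L^\times)$ with $C_L^G/C_L^{[G]}$. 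But at that point you have only \emph{reformulated} the lemma, not proved it: writing $u=N(x)$ and $(x)=\mathfrak c^{\,\sigma-1}$, the obstruction class $[\mathfrak c]\in C_L^G/C_L^{[G]}$ is precisely what must vanish, and your final paragraph offers only an intention (``semilocal units,'' ``$S$-unit cohomology,'' ``classes die in the limit'') with no argument. Nothing earlier forces this quotient to be trivial: indeed, since $(\sigma-1)A_L=A_L$ under your hypotheses, one has $H^1(G,A_L)=0$ and $H^2(G,A_L)=A_L^G$, so Lemmas \ref{chi} and \ref{useful} give
\begin{align*}
\ord_p|A_L^G| \;=\; \ord_p|H^2(G,\OO_L^\times)| + \ord_p|(I_L^GP_L)/(I_KP_L)|,
\end{align*}
and $A_L^G$ itself is typically nonzero (it is nonzero whenever $\lambda_L>0$, which the lemma permits). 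So separating the $A_L^{[G]}$ part from $A_L^G$ carries the full arithmetic weight of the statement; given your reduction, it is \emph{equivalent} to the lemma, and it is not a routine verification.

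It is worth seeing how the paper's proof sidesteps this infinite-level statement entirely: it never analyzes ambiguous classes of $L$. Instead it works at finite layers and takes a limit. Theorem \ref{Green} applied to $k_n/k_m$ (where $p\nmid h_{k_n}$ and the unique prime above $p$ is totally ramified) forces $N_{k_n/k_m}(\OO_{k_n}^\times)=\OO_{k_m}^\times$, so the induced norm maps $\widetilde N_{k_n/k_m}$ between the groups $H^2(\Gal(l_n/k_n),\OO_{l_n}^\times)\cong \OO_{k_n}^\times/N_{l_n/k_n}(\OO_{l_n}^\times)$ are surjective; Theorem \ref{Green} applied to $l_n/k_n$ bounds these groups uniformly, $\ord_p\le d(s_\infty-1)$, so the $\widetilde N_{k_n/k_m}$ are eventually isomorphisms. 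Since $\widetilde N_{k_n/k_m}\circ\widetilde\rho_{k_n/k_m}$ is the $p^{n-m}$-th power map, which kills these bounded groups once $n-m\ge d(s_\infty-1)$, the transition maps $\widetilde\rho$ are eventually trivial and hence $H^2(G,\OO_L^\times)\cong\varinjlim_n H^2(\Gal(l_n/k_n),\OO_{l_n}^\times)=0$. If you wish to salvage your route, the missing capitulation statement needs an argument of comparable substance (and the paper's bounded-order trick is the natural candidate to supply it); as written, your proposal stops exactly where the real work begins.
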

\begin{proof}
Here we generalize the method of proof found in \cite{Fuku}, where the result is proved in the case that $L$ is totally real and $[L:K]=p$. First, note that if $\p$ is the unique prime ideal of $k$ lying over $p$, then $\p_n/\p$ is totally ramified in $k_n/k$ and $p$ does not divide the class number of $k_n$ for all nonnegative integers $n$. Thus using Theorem \ref{Green} on the extension $k_n/k_m$ we find that for all nonnegative integers $m,n$ with $m\leq n$
\begin{align*}
\ord_p|\OO_{k_m}^{\times}/N_{k_n/k_m}(\OO_{k_n}^{\times})| &= \ord_p|H^2(\Gal(k_n/k_m), \OO_{k_n}^{\times})| \\
&= -(n-m)+ \ord_p|H^1( \Gal(k_n/k_m), \OO_{k_n}^{\times})| \\
&\leq -(n-m) +\ord_p(e(\p_n/\p_m)) +\ord_p|\ker(J_{k_n/k_m})| \\
&\leq -(n-m) + (n-m) + \ord_p|C_{k_m}| =0.
\end{align*}
Thus $N_{k_n/k_m}(\OO_{k_n}^{\times}) =\OO_{k_m}^{\times}$ for all nonnegative integers $m,n$ with $m \leq n$, so if $L=l\Q_{\infty}$ for some number field ${l}$ with $\Gal({l}/k)\cong \Gal(L/K) \cong \Z/(p^d)$, then the induced maps
\begin{align*}
\widetilde{N}_{k_n/k_m} \colon \frac{\OO_{k_n}^{\times}}{N_{{l}_n/k_n}(\OO_{{l}_n}^{\times})} \longrightarrow \frac{\OO_{k_m}^{\times}}{N_{{l}_m/k_m}(\OO_{{l}_m}^{\times})}
\end{align*}
are surjective for all nonnegative integers $m,n$ with $m\leq n$. On the other hand, letting $s_n$ be the number of ramified primes of $k_n$ in ${l}_n/k_n$ and $s_{\infty} < \infty$ be the number of ramified primes of $K$ in $L/K$, we see that Theorem \ref{Green} applied to the extension ${l}_n/k_n$ with $\Gal({l}_n/k_n) \cong \Gal(L/K)\cong \Z/(p^d)$ gives
\begin{align*}
\ord_p|\OO_{k_n}^{\times}/N_{{l}_n/k_n}(\OO_{{l}_n}^{\times})| &= \ord_p|H^2(\Gal({l}_n/k_n), \OO_{{l}_n}^{\times})| \\
&= -d + \ord_p|H^1(\Gal({l}_n/k_n), \OO_{{l}_n}^{\times})| \\
&\leq -d+ \sum_{i=1}^{s_n} \ord_p(e(w_i/v_i)) +\ord_p|\ker(J_{l_n/k_n})|  \\
&\leq -d + ds_n + \ord_p|C_{k_n}|\\
&\leq d(s_\infty - 1).
\end{align*}
Therefore the maps $\widetilde{N}_{k_n/k_m}$ are isomorphisms of finite abelian groups for sufficiently large $m,n$. Now consider the canonical maps
\begin{align*}
\widetilde{\rho}_{k_n/k_m} \colon \frac{\OO_{k_m}^{\times}}{N_{{l}_m/k_m}(\OO_{{l}_m}^{\times})} \longrightarrow \frac{\OO_{k_n}^{\times}}{N_{{l}_n/k_n}(\OO_{{l}_n}^{\times})}
\end{align*}
for $m\leq n$. These maps have the property that $\widetilde{N}_{k_n/k_m} \circ \widetilde{\rho}_{k_n/k_m}$ is the exponentiation by $p^{n-m}$ map when the groups are written multiplicatively. Thus when $n-m\geq d(s_{\infty} - 1)$ the composition $\widetilde{N}_{k_n/k_m} \circ \widetilde{\rho}_{k_n/k_m}$ is the trivial map, but $\widetilde{N}_{k_n/k_m}$ is an isomorphism for sufficiently large $m$, so $\widetilde{\rho}_{k_n/k_m}$ is the trivial map when $m$ is sufficiently large and $n \geq m+ d(s_{\infty} - 1)$. Therefore
\begin{align*}
H^2(G, \OO_L^{\times}) \cong \varinjlim_n H^2(G_n, \OO_{{l}_n}^{\times}) \cong 0
\end{align*}
which finishes the proof.
\end{proof}
Now we are ready to give the more specialized and easily applicable vanishing criterion.
\begin{thm}\label{vanishing2}
Let $L/K$ be a cyclic $p$-extension of $\Z_p$-fields which is unramified at every infinite place. Suppose $K = k\Q_{\infty}$ is the cyclotomic $\Z_p$-extension of a number field $k$ such that $p$ does not divide the class number of $k$ and $k$ has only one prime lying above $p$. Then $\lambda_L=0$ if and only if, for all prime ideals $\p$ of $K$ which ramify in $L/K$ and do not lie over $p$, the order in $C_L$ of the class of the product of prime ideals of $L$ lying over $\p$ is prime to $p$.
\end{thm}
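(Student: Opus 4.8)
The plan is to reduce the assertion to Theorem~\ref{vanishing1} and then reinterpret its surviving condition in terms of ideal classes. First I would record that the hypotheses on $k$ force $\mu_K=\lambda_K=0$: as established in the proof of Lemma~\ref{H2=0}, $p$ does not divide $h_{k_n}$ for any $n$, so $A_K=\varinjlim_n A_{k_n}=0$, whence $\lambda_K=\mu_K=0$ by the structure theorem for $A_K$. Thus condition (i) of Theorem~\ref{vanishing1} holds, and Lemma~\ref{H2=0} gives $\ord_p|H^2(G,\OO_L^{\times})|=0$, so condition (ii) holds as well. Consequently Theorem~\ref{vanishing1} reduces the claim to the single equivalence
\[
\lambda_L=0 \iff \ord_p\bigl|(I_L^GP_L)/(I_KP_L)\bigr|=0.
\]

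The heart of the argument is to identify the group $(I_L^GP_L)/(I_KP_L)$. Since $I_K\subseteq I_L^G$, the second isomorphism theorem gives $(I_L^GP_L)/(I_KP_L)\cong I_L^G/(I_L^G\cap I_KP_L)$, and a short computation shows $I_L^G\cap I_KP_L=I_KP_L^G$: if $x\in I_L^G$ is written $x=\mathfrak{a}(\alpha)$ with $\mathfrak{a}\in I_K$ and $(\alpha)\in P_L$, then $(\alpha)=\mathfrak{a}^{-1}x\in I_L^G\cap P_L=P_L^G$. Passing to $C_L$ through the map $I_L^G\to C_L$, whose kernel is $P_L^G$ and whose image is $C_L^{[G]}$, and noting that $I_K$ maps onto $J_{L/K}(C_K)$, I obtain
\[
(I_L^GP_L)/(I_KP_L)\cong C_L^{[G]}/J_{L/K}(C_K),
\]
the cokernel appearing at the end of the exact sequence in Theorem~\ref{Green}.

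Writing $Q:=C_L^{[G]}/J_{L/K}(C_K)$ and $\mathfrak{w}_\p:=\prod_{w\mid\p}w$ for the product of the primes of $L$ above a prime $\p$ of $K$, I would next determine the structure of $Q$. Since $I_L^G$ is free on the $\mathfrak{w}_u$ with $u\nmid p$ (the same fact underlying Lemma~\ref{chi}), the group $C_L^{[G]}$ is generated by the classes $[\mathfrak{w}_u]$; for unramified $u$ one has $u\OO_L=\mathfrak{w}_u$, so $[\mathfrak{w}_u]\in J_{L/K}(C_K)$ and this class dies in $Q$. Hence $Q$ is generated by the finitely many classes $\overline{[\mathfrak{w}_\p]}$ attached to ramified $\p\nmid p$, and since $\p\OO_L=\mathfrak{w}_\p^{\,e(w/\p)}$ with $e(w/\p)$ a power of $p$, each such generator satisfies $\overline{[\mathfrak{w}_\p]}^{\,e(w/\p)}=1$. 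Therefore $Q$ is a finite abelian $p$-group, so $\ord_p|Q|=0$ if and only if $Q=0$, i.e.\ if and only if $[\mathfrak{w}_\p]\in J_{L/K}(C_K)$ for every ramified $\p\nmid p$.

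It remains to convert membership in $J_{L/K}(C_K)$ into the order condition of the statement, and this is exactly where the $p$-group structure of $Q$ is used. Because $A_K=0$, every class of $C_K$ has order prime to $p$, so every element of $J_{L/K}(C_K)$ does too; thus $[\mathfrak{w}_\p]\in J_{L/K}(C_K)$ forces the order of $[\mathfrak{w}_\p]$ in $C_L$ to be prime to $p$. Conversely, if that order is prime to $p$, then the image $\overline{[\mathfrak{w}_\p]}$ in the $p$-group $Q$ has order both prime to $p$ and a power of $p$, hence is trivial, so $[\mathfrak{w}_\p]\in J_{L/K}(C_K)$. Combining these equivalences yields precisely the stated criterion. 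I expect the main obstacle to be the bookkeeping behind the isomorphism $(I_L^GP_L)/(I_KP_L)\cong C_L^{[G]}/J_{L/K}(C_K)$ — in particular the equality $I_L^G\cap I_KP_L=I_KP_L^G$ and the fact that $C_L^{[G]}$ is generated by the $[\mathfrak{w}_u]$ with $u\nmid p$ — after which the $p$-group argument is entirely routine.
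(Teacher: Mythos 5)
Your proposal is correct and follows essentially the same route as the paper: reduce to Theorem \ref{vanishing1}, with conditions (i) and (ii) supplied by Iwasawa's vanishing criterion (via $p\nmid h_{k_n}$ for all $n$) and Lemma \ref{H2=0}, and then dispose of condition (iii) by observing that $(I_L^GP_L)/(I_KP_L)$ is a finite abelian $p$-group generated by the classes of the products $\prod_{w\mid\p}w$ for ramified $\p\nmid p$, so that these classes having prime-to-$p$ order kills the group. Your explicit identification of $(I_L^GP_L)/(I_KP_L)$ with $C_L^{[G]}/J_{L/K}(C_K)$ merely spells out (via the computation already present in Lemma \ref{useful}) what the paper asserts without proof, so there is no substantive difference between the two arguments.
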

\begin{proof}
The ``$\Rightarrow$'' implication is clear. The ``$\Leftarrow$'' implication follows from Theorem \ref{vanishing1} since the assumptions we made ensure that conditions (1) and (2) hold by Iwasawa's well-known vanishing criterion and Lemma \ref{H2=0}, respectively, and with regard to condition (3), we only need to note that $(I_L^GP_L)/(I_KP_L)$ is a $p$-group generated by the classes of products of prime ideals of $L$ lying over $\p$ where $\p$ runs through all prime ideals of $K$ which ramify in $L/K$ and do not lie above $p$.
\end{proof}

\section*{Acknowledgements}

This work is adapted from a portion of the author's Ph.D. thesis at the University of Arizona \cite{thesis}. He thanks his advisor, William McCallum, for his time and encouragement. The author would also like to acknowledge the many helpful comments of the reviewers including the correction of a few typos/errors and the streamlining of several proofs. In particular, the proofs of Theorems \ref{subgroup} and \ref{vanishing1} were greatly improved by their suggestions.


\begin{thebibliography}{0}
\providecommand{\bysame}{\leavevmode\hbox to3em{\hrulefill}\thinspace}

\bibitem{Arti} E. Artin and J. Tate, {\it Class Field Theory} (AMS Chelsea Pub., 2009).

\bibitem{Fuku} T. Fukuda, K. Komatsu, M. Ozaki, and H. Taya, On {I}wasawa $\lambda_p$-invariants of relative real cyclic extensions of degree $p$, {\it Tokyo J. Math.} {\bf 20}(2) (1997) 475--480.

\bibitem{gree} R. Greenberg, {\it Topics in Iwasawa Theory}, (Available at
  \url{math.washington.edu/~greenber/}, 2010).

\bibitem{Hell} A. Heller and I. Reiner, Representations of cyclic groups in rings of integers{,
  I}, {\it Ann. of Math.} {\bf 76}(1) (1962) 73--92.

\bibitem{Iwas5} K. Iwasawa, On {$\Gamma$}-extensions of algebraic number fields, {\it Bull. Amer. Math. Soc.} {\bf 65} (1959) 183--226.

\bibitem{Iwas4} \bysame, On {${\bf Z}_{l}$}-extensions of algebraic number fields, {\it Ann. of
  Math. (2)} {\bf 98} (1973) 246--326.

\bibitem{Iwas} \bysame, Riemann-{H}urwitz formula and {$p$}-adic {G}alois representations
  for number fields, {\it Tohoku Math. J. (2)} {\bf 33}(2) (1981) 263--288.
  
\bibitem{thesis} J. Schettler, The change in lambda invariants for cyclic $p$-extensions of {$\mathbb{Z}_p$}-fields, Ph.D. thesis, The University of Arizona (2012).

\end{thebibliography}
\end{document}